\documentclass[12pt,reqno]{article}
\usepackage{authblk}
\usepackage{hyperref}
\usepackage{amssymb}
\usepackage{amsthm}
\usepackage{amsmath}
\usepackage{multirow}
\usepackage{float}
\usepackage{graphicx}
\usepackage{listings}
\usepackage{MnSymbol}
\usepackage{nicematrix}
\usepackage{titlesec}
\newtheorem{theorem}{Theorem}[section]

\newtheorem{proposition}[theorem]{Proposition}

\newtheorem{example}[theorem]{Example}

\newtheorem{remark}[theorem]{Remark}

\begin{document}
\title{\Large\bfseries Equivalence Problem for Non-Linearizable Third-Order ODEs with Four-Dimensional Lie Symmetry Subalgebras under Point Transformations}
\author[1]{Omar A. Abuloha\thanks{1227001@student.birzeit.edu}}
\author[1]{Marwan Aloqeili\thanks{maloqeili@birzeit.edu}}
\author[1]{Ahmad Y. Al-Dweik\thanks{aaldweik@birzeit.edu}}
\author[2]{F. M. Mahomed\thanks{Corresponding Author: Fazal.Mahomed@wits.ac.za}}
\affil[1]{Department of Mathematics, Birzeit University, Ramallah, Palestine}
\affil[2]{School of Computer Science and Applied Mathematics, 
University of the Witwatersrand, Johannesburg, Wits 2050,  South Africa}
\maketitle
\begin{abstract}
Cartan's equivalence method is applied to explicitly construct invariant coframes for \emph{four branches}, which are used to characterize all \emph{non-linearizable} third-order ODEs with a \emph{four-dimensional Lie  symmetry subalgebra} under point transformations. Additionally, we present a method for constructing the \emph{point  transformations} based on the derived invariant coframes. Examples are provided to illustrate our approach.
\end{abstract}
\bigskip
Keywords:  Cartan's equivalence method,  Equivalence problem,
Non-Linearizable third-order ODEs, Point symmetries, Point transformations.
\section{Introduction}
Lie’s classification \cite{Lie1888} shows that, over the complex domain, every scalar second-order ordinary differential equation (ODE) $u''=f(x,u,u')$ admitting a three-dimensional Lie algebra can be categorized into one of four distinct types.

Using the direct method, Ibragimov and Meleshko \cite{Ibragimov2007,Ibragimov2008} developed an invariant description of scalar second-order ordinary differential equations with three infinitesimal point symmetries, identifying candidate equations for each of the four types and studying them comprehensively. This study is important because any non-linearizable second-order ODE with a three-dimensional Lie symmetry algebra can be reduced, via point transformations, to one of these four canonical types. Recently, a new framework of the Cartan equivalence method has been employed by Al-Dweik et al. \cite{Al-Dweik2025} to explicitly construct three-dimensional invariant coframes for three branches, enabling the characterization of scalar second-order ODEs with a three-dimensional Lie algebra of point symmetries. Moreover, they introduced a procedure for constructing the corresponding point transformations from the obtained invariant coframes.

A natural extension of recent works is to investigate the equivalence of non-linearizable third-order ODEs possessing four-dimensional Lie symmetry subalgebras under both point and contact transformations.  These are two distinct problems; by contact we mean proper contact transformations, not point transformations. The objective of this paper is to provide an invariant characterization of non-linearizable third-order ODEs possessing four-dimensional Lie symmetry subalgebras under point transformations. 

A complete classification of scalar third-order ODEs which possess
non-similar real point symmetry Lie algebras $L_r$ of dimension $r$, where $r=1,\ldots,7$ can be obtained from many sources in the literature (see \cite{Mahomed1988,Gat1992,Schmucker1998,Ibragimov1996}). Classification of scalar third-order ODEs admitting non-similar real Lie algebras $L_r$ of dimension $r$, where $r=4, 5, 6, 7$ is given in the Appendix.

Based on Table 2 given in the Appendix, we conclude that any \emph{non-linearizable} third-order ODE $u''' = f(x,u,u',u'')$  under point transformations having a \emph{four}-dimensional Lie symmetry subalgebra  belongs to one of \emph{five} distinct types listed in the following table.  We denote $u', u''$ by $p, q$ respectively, in the following and thereafter.
\begin{table}[H]
\centering
\renewcommand{\arraystretch}{1.4}
\begin{tabular}{|l|l|l|}
\hline
\textbf{Algebra} & \textbf{Point symmetry realizations} & \textbf{Representative equations} \\  \hline\hline
$L_{4;2}$ & 
$X_1 = \partial_x,\quad X_2 = \partial_u,\quad X_3 = u\partial_u,$ & 
$u''' = \alpha \frac{q^2}{p},\quad \alpha \ne 0, \tfrac{3}{2}, 3$ \\
 & $X_4 = x\partial_x$ & \\ \hline
$L_{4;3}$ & 
\begin{tabular}[c]{@{}l@{}}$X_1 = \partial_u,\quad X_2 = \partial_x,\quad X_3 = x\partial_u,$\\ 
$X_4 = x\partial_x + (1 + b)u\partial_u,\quad b \ne 1$ \end{tabular} & 
$u''' =q^{\frac{b-2}{b-1}},\quad b \ne 1,2$ \\ \hline
$L_{4;5}^{II}$ & 
\begin{tabular}[c]{@{}l@{}}$X_1 = \partial_u,\quad X_2 = x\partial_x + u\partial_u,$\\ 
$X_3 = x\partial_x, \quad X_4 = 2xu\partial_x + u^2\partial_u$ \end{tabular} & 
$u''' = 3 \frac{q^2}{p} + \alpha \frac{(2xq + p)^{3/2}}{x^2 \sqrt{p}},\quad \alpha \ne 0$ \\ \hline

$L_{4;6}$ & 
\begin{tabular}[c]{@{}l@{}}$X_1 = \partial_u,\quad X_2 = x\partial_u,\quad X_3 = \partial_x,$\\ 
$X_4 = u\partial_x + \left(2u + \tfrac{1}{2}x^2\right)\partial_u$ \end{tabular} & 
$u''' =e^{-q}$\\ \hline

$L_{6;1}$ & 
\begin{tabular}[c]{@{}l@{}}$X_1 = \partial_x,\quad X_2 = x\partial_x,\quad X_3 = x^2\partial_x,$\\ 
$X_4 = \partial_u,\quad X_5 = u\partial_u,\quad X_6 = u^2\partial_u$ \end{tabular} & 
$u''' = \frac{3}{2} \frac{q^2}{p}$ \\ \hline

\end{tabular}
\caption{Canonical forms of non-linearizable third-order ODEs admitting four-dimensional Lie symmetry subalgebras under point transformation}
\end{table}
It should be noted here that the list presented in Table 1 will be shorter when the problem is considered under contact transformations, as some ODEs that are non-linearizable by point transformations may become linearizable under contact transformations.
\begin{remark}
In the complex plane $L_{6;1}$ is isomorphic and similar to  $L_{6;2}$. Indeed, the complex transformation $\bar x=u+ix, \bar u=x+iu$ maps the Equation $\bar{u}'''=\frac{3}{2} \frac{\bar{u}''^2}{\bar{u}'}$ to the Equation $u'''=\frac{3u'u''^2}{1+u'^2}$ .  So we exclude $L_{6;2}$ from Table 2 of the appendix.
\end{remark}
\begin{remark}
In the complex plane $L_{4;2}$ is isomorphic and similar to  $L_{4;4}$. Indeed, the complex transformation $\bar x=u+ix, \bar u=x+iu$ maps the Equation $\bar{u}'''=\alpha\frac{\bar{u}''^2}{\bar{u}'}$ to the Equation $u'''=\frac{3u'u''^2}{1+u'^2}+i(2\alpha-3)\frac{u''^2}{1+u'^2}$ .  So we exclude $L_{4;4}$ from Table 2 of the appendix.
\end{remark}
\begin{remark}
In Table 1, we omit all linear canonical forms listed in Table 2 of the appendix, as our focus is on non-linearizable third-order ODEs.
\end{remark}
\begin{remark}
The Lie algebras $L_{4;2}$ when $\alpha=3$ admits $7$ point symmetries and is therefore linearizable by a point transformation.  So we exclude it from Table 2 of the appendix.
\end{remark}
\begin{remark}
In Table 1, we  omit  $\alpha$ in the representative equation of $L_{4;3}$  listed in  Table 2 of the appendix, where it is noted that the transformation $\bar{x} =\frac{1}{\alpha} x,~\bar{u}=\frac{1}{\alpha^2}u$ maps the canonical from ${\bar{u}}'''= \alpha~ {\bar{u}}''^{\frac{b-2}{b-1}}$ to $u'''= {u''}^{\frac{b-2}{b-1}}$.
\end{remark}
\begin{remark}
In Table 1, we exclude  $\alpha$ in the representative equation of $L_{4;6}$  listed in  Table 2 of the appendix, where it is noted that the transformation $\bar{x} =\frac{1}{\alpha} x,~\bar{u}=\frac{1}{\alpha^2}u$ maps the canonical from ${\bar{u}}'''= \alpha~e^{-{\bar{u}}''}$ to $u'''= e^{-u''}$.
\end{remark}
This study is important because it complements recent research in the literature on the equivalence problem for linearizable third-order ODEs under both point and contact transformations  (see \cite{Chern1940, Neut2002, Ibragimov2005, Dweik2018_1, Dweik2019, Dweik2018_2}).

The paper is organized as follows. In the next section, we apply Cartan’s equivalence method to scalar third-order ODEs under the Lie group of \emph{point} transformations to explicitly construct  invariant coframes for \emph{four branches}, which are used to characterize all  third-order ODEs  given in Table 1.  In Section 3, main theorems are given. In Section 4,  a method for constructing the \emph{point} transformations is given based on the derived invariant coframes. Illustration of the theorems are amply explained through examples. A conclusion is presented in the last section.
\section{Utilization of Cartan's equivalence approach}
\setcounter{equation}{0}
The fundamental concepts, notations, and findings required in this section may be found in the foundational publications \cite{Olver1995,Neut2003}.
Let $\left(x, u, p=u^{\prime},q=u^{\prime\prime}\right) \in \mathbb{R}^4$ be local coordinates of the second-order jet space $\bf{J}^2$. Throughout this paper, the 1-forms $\pi^{\prime\kappa},~\kappa=1, \ldots, 8$  denote the modified Maurer-Cartan forms. Consider the following base coframe on $M=\bf{J}^2$
\begin{equation}\label{2.1}
\left(\begin{array}{l}
\omega^1\\\omega^2\\\omega^3\\\omega^4\\
\end{array}\right)
=\left(\begin{array}{c}
du-pdx\\
dp-qdx\\
dq-fdx\\
dx\\
\end{array}\right).
\end{equation}
The equivalence  of the third-order ODEs
\begin{equation}\label{2.2}
\begin{array}{l}
u^{\prime \prime \prime}=f\left(x, u, u^{\prime},u^{\prime \prime} \right), \quad \bar{u}^{\prime \prime\prime}=\bar{f}\left(\bar{x}, \bar{u}, \bar{u}^{\prime},\bar{u}^{\prime \prime}\right),
\end{array}
\end{equation}
under a point transformation
\begin{equation}\label{2.3}
\bar{x}=\varphi \left( x,u \right),~\bar{u} =\psi \left( x,u  \right),~~~\varphi_x\psi_u   -  \varphi_u \psi_x\neq0,\\
\end{equation}
can be translated, in local coordinates,  using the base coframe (\ref{2.1}) as the following equivalence conditions
\begin{equation}\label{2.4}
\Phi^*\left(\begin{array}{c}
\bar{\omega}^1 \\
\bar{\omega}^2 \\
\bar{\omega}^3\\
\bar{\omega}^4
\end{array}\right)=\left(\begin{array}{cccc}
a_1 & 0 & 0&0 \\
a_2 & a_3 & 0 &0\\
a_4 & a_5 & a_6&0\\
a_7&0&0&a_9
\end{array}\right)\left(\begin{array}{c}
\omega^1 \\
\omega^2 \\
\omega^3\\
\omega^4
\end{array}\right),
\end{equation}
for functions $a_i(x,u,p,q),~ i=1,2,3,4,5,6,7,9$,  where $\Phi^*$  denotes  the pullback defined by the second prolongation of the point transformation (\ref{2.3}). Therefore, the associated structure group is the eight-dimensional Lie group
\begin{equation}\label{2.5}
G=\left\{
\left(\begin{array}{cccc}
a_1 & 0 & 0&0 \\
a_2 & a_3 & 0 &0\\
a_4 & a_5 & a_6&0\\
a_7&0&0&a_9
\end{array}\right)  \Bigg \vert a_1a_3a_6a_9\neq 0
\right\}.
\end{equation}
We demonstrate that utilizing Cartan’s equivalence approach to examine the equivalence of scalar third-order ODEs, with each canonical representation listed in Table 1, results in four invariant coframes associated with four unique branches.
Let  $\theta$ denote the lifted coframe
\begin{equation}\label{2.6}
\begin{array}{ll}
\left(\begin{array}{c}
\theta^1 \\
\theta^2 \\
\theta^3\\
\theta^4
\end{array}\right)=\left(\begin{array}{cccc}
a_1 & 0 & 0&0 \\
a_2 & a_3 & 0 &0\\
a_4 & a_5 & a_6&0\\
a_7&0&0&a_9
\end{array}\right)\left(\begin{array}{c}
\omega^1 \\
\omega^2 \\
\omega^3\\
\omega^4
\end{array}\right).
\end{array}
\end{equation}
After absorption, the first structure equations for (\ref{2.6}) is
\begin{equation}\label{2.7}
\begin{array}{ll}
d\left(\begin{array}{c}
\theta^1 \\
\theta^2 \\
\theta^3\\
\theta^4
\end{array}\right)=\left(\begin{array}{cccc}
\pi^{\prime1} & 0 & 0&0 \\
\pi^{\prime2}&  \pi^{\prime3} & 0&0 \\
\pi^{\prime4}& \pi^{\prime5} & \pi^{\prime6}&0\\
\pi^{\prime7}&0&0&\pi^{\prime8}
\end{array}\right) \wedge\left(\begin{array}{c}
\theta^1 \\
\theta^2 \\
\theta^3\\
\theta^4
\end{array}\right)+\left(\begin{array}{c}
T_{24}^1 ~\theta^2 \wedge \theta^4 \\
T_{34}^2~\theta^3\wedge\theta^4 \\
0\\
0
\end{array}\right).
\end{array}
\end{equation}
The explicit formula for the essential torsion coefficients are $T_{24}^1=-\frac{a_1}{a_3 a_9}$ and $T_{34}^2=-\frac{a_3}{a_6 a_9}$
which can be normalized to -1 by setting $a_6=\frac{a^2_3} {a_1},a_9=\frac{a_1}{a_3}$. Thus, the structure group is reduced to
\begin{equation}\label{2.8}
{G}_1=\left\{
\left(\begin{array}{cccc}
a_1 & 0 & 0&0 \\
a_2 & a_3 & 0 &0\\
a_4 & a_5 & \frac{a^2_3} {a_1}&0\\
a_7&0&0&\frac{a_1}{a_3}
\end{array}\right)
 \Bigg \vert a_1a_3\neq 0
\right\},
\end{equation}
with the lifted one-forms
\begin{equation}\label{2.9}
\left(\begin{array}{l}
\theta^1 \\
\theta^2 \\
\theta^3 \\
\theta^4
\end{array}\right)=\left(\begin{array}{cccc}
a_1 & 0 & 0&0 \\
a_2 & a_3 & 0 &0\\
a_4 & a_5 & \frac{a^2_3} {a_1}&0\\
a_7&0&0&\frac{a_1}{a_3}
\end{array}\right)\left(\begin{array}{c}
\omega^1 \\
\omega^2 \\
\omega^3\\
\omega^4
\end{array}\right) .
\end{equation}
The  structure equations become  after a \emph{second loop} of  reduction and  absorption,
\begin{equation}\label{2.10}
d\left(\begin{array}{c}
\theta^1 \\
\theta^2 \\
\theta^3\\
\theta^4
\end{array}\right)=\left(\begin{array}{cccc}
 \pi^{\prime1} & 0 & 0&0 \\
\pi^{\prime2} &  \pi^{\prime3} & 0&0 \\
\pi^{\prime4} &\pi^{\prime5}& 2\pi^{\prime3}-\pi^{\prime1}&0\\
\pi^{\prime6}&0&0&\pi^{\prime1}-\pi^{\prime3}
\end{array}\right) \wedge\left(\begin{array}{c}
\theta^1 \\
\theta^2 \\
\theta^3\\
\theta^4
\end{array}\right)+\left(\begin{array}{c}
- \theta^2 \wedge \theta^4 \\
 - \theta^3 \wedge \theta^4 \\
T_{34}^3~\theta^3 \wedge \theta^4 \\
0
\end{array}\right).
\end{equation}
where  $T_{34}^3=\frac{a_3^2I_1-3a_1a_5+3a_2a_3}{a_1a_3}$ is the only essential torsion coefficient that appears in the structure equations (\ref{2.10}), which can be normalized to 0  by setting $a_5=\frac{a_2a_3}{a_1}+\frac{a_3^2}{3a_1}I_1$, where $I_1=-f_q$. 
Therefore, the structure group can be reduced to
\begin{equation}\label{2.11}
{G}_2=\left\{
\left(\begin{array}{cccc}
a_1 & 0 & 0&0 \\
a_2 & a_3 & 0 &0\\
a_4 &\frac{a_2a_3}{a_1}+\frac{a_3^2}{3a_1}I_1& \frac{a^2_3} {a_1}&0\\
a_7&0&0&\frac{a_1}{a_3}
\end{array}\right)
 \Bigg \vert a_1a_3\neq 0
\right\},
\end{equation}
with the lifted one-forms
\begin{equation}\label{2.12}
\left(\begin{array}{l}
\theta^1 \\
\theta^2 \\
\theta^3 \\
\theta^4
\end{array}\right)=\left(\begin{array}{cccc}
a_1 & 0 & 0&0 \\
a_2 & a_3 & 0 &0\\
a_4 &\frac{a_2a_3}{a_1}+\frac{a_3^2}{3a_1}I_1& \frac{a^2_3} {a_1}&0\\
a_7&0&0&\frac{a_1}{a_3}
\end{array}\right)\left(\begin{array}{c}
\omega^1 \\
\omega^2 \\
\omega^3\\
\omega^4
\end{array}\right) .
\end{equation}
Proceeding to a \emph{third loop} of reduction, the structure equations become after absorption,
\begin{equation}\label{2.13}
\begin{array}{ll}
d\left(\begin{array}{c}
\theta^1 \\
\theta^2 \\
\theta^3\\
\theta^4
\end{array}\right)=\left(\begin{array}{cccc}
\pi^{\prime1} & 0 & 0&0 \\
\pi^{\prime2}&  \pi^{\prime3} & 0&0 \\
 \pi^{\prime4}& \pi^{\prime2} & 2\pi^{\prime3}-\pi^{\prime1}&0\\
 \pi^{\prime5}&0&0&\pi^{\prime1}-\pi^{\prime3}
\end{array}\right) \wedge\left(\begin{array}{c}
\theta^1 \\
\theta^2 \\
\theta^3\\
\theta^4
\end{array}\right)+\left(\begin{array}{c}
-\theta^2 \wedge \theta^4 \\
-\theta^3\wedge\theta^4 \\
T_{24}^3~\theta^2 \wedge \theta^4\\
T_{24}^4~\theta^2 \wedge \theta^4
\end{array}\right)\end{array}.
\end{equation}
The explicit formula for the essential torsion coefficients are $T_{24}^3=\frac{a_3^2I_2-2a_4a_1+a_2^2}{a_1^2}$, $T^4_{24}=\frac{a_1I_3-a_7a_3}{a_1a_3}$. We get the normalizations $T^3_{24}=0$ and $T^4_{24}=0$ by setting  $a_4=\frac{a_2^2}{2a_1}+\frac{a_3^2}{2a_1}I_2$ and  $a_7=\frac{a_1}{a_3}I_3$, respectively, where  $I_2=-\frac{2}{9}I_1^2-f_p-\frac{1}{3}\hat{D}_xI_1$ and $I_3=-\frac{1}{6}I_{1_q}$. 
Hence, the structure group is reduced to
\begin{equation}\label{2.14}
{G}_3=\left\{
\left(\begin{array}{cccc}
a_1 & 0 & 0&0 \\
a_2 & a_3 & 0 &0\\
\frac{a_2^2}{2a_1}+\frac{a_3^2}{2a_1}I_2&\frac{a_2a_3}{a_1}+\frac{a_3^2}{3a_1}I_1& \frac{a^2_3} {a_1}&0\\
\frac{a_1}{a_3} I_3&0&0&\frac{a_1}{a_3}
\end{array}\right)
 \Bigg \vert a_1a_3\neq 0
\right\},
\end{equation}
with the lifted one-forms
\begin{equation}\label{2.15}
\left(\begin{array}{l}
\theta^1 \\
\theta^2 \\
\theta^3 \\
\theta^4
\end{array}\right)=\left(\begin{array}{cccc}
a_1 & 0 & 0&0 \\
a_2 & a_3 & 0 &0\\
\frac{a_2^2}{2a_1}+\frac{a_3^2}{2a_1}I_2&\frac{a_2a_3}{a_1}+\frac{a_3^2}{3a_1}I_1& \frac{a^2_3} {a_1}&0\\
\frac{a_1}{a_3}I_3&0&0&\frac{a_1}{a_3}
\end{array}\right)\left(\begin{array}{c}
\omega^1 \\
\omega^2 \\
\omega^3\\
\omega^4
\end{array}\right) .
\end{equation}
The structure equations become after a \emph{fourth loop} of reduction and  absorption,
\begin{equation}\label{2.16}
\begin{array}{ll}
d\left(\begin{array}{c}
\theta^1 \\
\theta^2 \\
\theta^3\\
\theta^4
\end{array}\right)=\left(\begin{array}{cccc}
\pi^{\prime1} & 0 & 0&0 \\
\pi^{\prime2}&  \pi^{\prime3} & 0&0 \\
0& \pi^{\prime2} & 2\pi^{\prime3}-\pi^{\prime1}&0\\
0&0&0&\pi^{\prime1}-\pi^{\prime3}
\end{array}\right) \wedge\left(\begin{array}{c}
\theta^1 \\
\theta^2 \\
\theta^3\\
\theta^4
\end{array}\right)+\left(\begin{array}{c}
-\theta^2 \wedge \theta^4 \\
-\theta^3\wedge\theta^4 \\
T_{14}^3~\theta^1 \wedge \theta^4\\
T_{12}^4~\theta^1 \wedge \theta^2+T^4_{13}~\theta^1 \wedge \theta^3
\end{array}\right).
\end{array}
\end{equation}
The essential  torsion coefficients are 
\begin{equation}\label{2.17}
\begin{array}{ccc}
T^3_{14}=\frac{a_3^3}{a_1^3}I_4,
&T^4_{12}=\frac{1}{a_3^2}I_5-\frac{a_2}{a_3^3}I_6,
&T^4_{13}=\frac{a_1}{a_3^3}I_6,\\
\end{array}
\end{equation}
where 
\begin{equation}\label{2.18}
\begin{array}{ccc}
I_4=-\frac{1}{3}I_1I_2-f_u-\frac{1}{2}\hat{D}_xI_2,
&I_5=\frac{1}{3}I_1I_{3_q}-I_3^2-I_{3_p},
&I_6=-I_{3_q}.
\end{array}
\end{equation}
Any multiplicative factor of the absolute invariant that is independent of the group parameters constitutes a relative invariant. A relative invariant refers to an expression that, when subjected to a point transformation, is transformed into a multiple of itself. If the expression equals zero prior to the transformation, it will continue to equal zero subsequently. As a result, we derive the subsequent four branches.
\subsection{Branch $I_{4}\ne 0$ and $ I_{6}\neq 0$:}
 We can eliminate the group parameters $a_{1} , a_{2}, a_{3}$ by normalizing 
$T^3_{14}=\frac{1}{r}, T^4_{12}=0$  and $T^4_{13}=\frac{1}{s}$ by setting 
\begin{equation}\label{2.19}
\begin{array}{lll}
a_{1}=J_{4}^3J_{6},& a_{2}=s \frac{J_4I_5}{J_6},& a_{3}=J_4J_6,
\end{array}
\end{equation}
where $r I_4=J_4^6$, $s I_6=J_6^2$ and $r, s$ are any nonzero numbers, this
gives invariant coframe on the space  $M=\bf{J}^2.$
\begin{equation}\label{2.20}
\begin{pmatrix}
\theta^{1}\\
\theta^{2}\\
\theta^{3}\\
\theta^4\\
\end{pmatrix}=
\left(\begin{array}{cccc}
J_{4}^3J_{6} & 0 & 0&0 \\
\frac{sJ_4I_5}{J_6} & J_4J_6 & 0 &0 \\
\frac{I_2J_6^4+I_5^2}{2J_4J_6^3}&\frac{I_1J_6^2+3s I_5}{3J_4J_6} &\frac{J_6}{J_4}&0\\
I_3J_4^2&0&0&J_4^2
\end{array}\right)
\begin{pmatrix}
\omega^{1}\\
\omega^{2}\\
\omega^{3}\\
\omega^{4}
\end{pmatrix}.
\end{equation}
The invariant coframe corresponding to the canonical forms $u'''= e^{-u''}$ and $u'''= u''^{\frac{b-2}{b-1}}$, $b\ne-1,0,\frac{1}{2},1,2$   in this branch satisfies the structure equations with constant structure functions. 
\subsubsection{The canonical form $u'''= e^{-u''}.$}
By setting $r=4$ and $s=1$, we determine
\begin{equation}\label{2.21}
\begin{array}{ll}
&d\theta^{1}=\frac{1}{2}A^5B~\theta^{1}\wedge\theta^{2}+4AB~\theta^1\wedge\theta^3+\frac{9}{4}A^4~\theta^1\wedge\theta^4-\theta^2\wedge\theta^4,\\
&d\theta^{2}=\frac{17}{16}A^3B~\theta^{1}\wedge\theta^{2}-2A^5B~\theta^{1}\wedge\theta^{3}-\frac{11}{32}A^2~\theta^{1}\wedge\theta^{4}+2AB~\theta^2\wedge\theta^3+2A^4~\theta^{2}\wedge\theta^{4}\\
&~~~~~~~-\theta^{3}\wedge\theta^{4},\\
&d\theta^{3}=\frac{1}{32}AB~\theta^{1}\wedge\theta^{2}+\frac{17}{8}A^3B~\theta^1\wedge\theta^{3}+\frac{1}{4}\theta^{1}\wedge\theta^{4}-\frac{1}{2}A^5B~\theta^2\wedge\theta^3-\frac{11}{32}A^2~\theta^2\wedge\theta^4\\
&~~~~~~+\frac{7}{4}A^4~\theta^3\wedge\theta^4,\\
&d\theta^{4}=\theta^1\wedge\theta^3+\frac{25}{16}A^3B~\theta^1\wedge\theta^4-A^5B~\theta^2\wedge\theta^4-2AB~\theta^3\wedge\theta^4,
\end{array}
\end{equation}
where $A^6=1, B^2=1$.
\subsubsection{The canonical form $u'''= u''^{\frac{b-2}{b-1}}, b\ne-1,0,\frac{1}{2},1,2.$}
By inserting $r=-8$ and $s=-1$, we deduce
\begin{equation}\label{2.22}
\begin{array}{ll}
&d\theta^{1}=-\frac{\alpha }{2\beta b^2}A^5 B~\theta^{1}\wedge\theta^{2}+\frac{2\beta(b+1)}{\alpha}AB~\theta^1\wedge\theta^3+\frac{8b^2+3b-2}{4\beta^2b}A^4~\theta^1\wedge\theta^4-\theta^2\wedge\theta^4,\\
&d\theta^{2}=-\frac{4b^5+4b^4-11b^3-34b^2+12b+8}{16\alpha\beta^3b^2}A^3B~\theta^{1}\wedge\theta^{2}+\frac{\alpha(3b+1)}{2\beta b^2}A^5B~\theta^{1}\wedge\theta^{3}\\
&+\frac{4b^4+4b^3-27b^2+4b+4}{32\beta^4 b^2}A^2~\theta^{1}\wedge\theta^{4}+\frac{2\beta b}{\alpha}AB~\theta^2\wedge\theta^3+\frac{1+b}{\beta^2}A^4~\theta^{2}\wedge\theta^{4}-\theta^{3}\wedge\theta^{4},\\
&d\theta^{3}=-\frac{\alpha^3(4b^4+8b^3-3b^2-9b-2)}{64\beta^5b^5}AB~\theta^{1}\wedge\theta^{2}-\frac{4b^5-5b^3-13b^2-16b-4}{16\alpha\beta^3b^2}A^3B~\theta^1\wedge\theta^{3}\\
&-\frac{1}{8}\theta^{1}\wedge\theta^{4}+\frac{\alpha}{2\beta b }A^5B~\theta^2\wedge\theta^3+\frac{4b^4+4b^3-27b^2+4b+4}{32\beta^4b^2}A^2~\theta^2\wedge\theta^4+\frac{5b+2}{4\beta^2b}A^4~\theta^3\wedge\theta^4,\\
&d\theta^{4}=-\theta^1\wedge\theta^3-\frac{20b^4-30b^3-23b^2+4b+4}{16\alpha\beta^3b^2}A^3B~\theta^1\wedge\theta^4+\frac{\alpha(b+1)}{2\beta b^2}A^5B~\theta^2\wedge\theta^4\\
&-\frac{2\beta}{\alpha}AB~\theta^3\wedge\theta^4,
\end{array}
\end{equation}
where $A^6=1, B^2=1$ and $\alpha^2=b^2-2b, \beta^6=2b^3-3b^2-3b+2$.
It should be stressed here that  $T^1_{12}T^2_{23}=-\frac{1}{b}$ is an invariant relation, which enables the determination of the parameter $b$ appearing in the canonical form.
\begin{remark}
The cases $b=-1,0,\frac{1}{2}$ have been excluded from the representative equation of $L_{4;3}$,  as they do not belong to this branch. These cases occur in other branches and are addressed later.
\end{remark}
This proves Theorem  \ref{Th3.1} in section 3, see \cite{Olver1995} [Theorem 8.15, page 268].
\subsection{Branch $I_{6}= 0$ and $I_4\ne0, I_{5}\neq 0$:}
 We can eliminate the group parameters $a_{1} , a_{3}$ using the normalizations 
$T^3_{14}=\frac{1}{r}, T^4_{12}=-1$  by setting 
\begin{equation}\label{2.23}
\begin{array}{lll}
a_{1}=J_{4}J_5,& a_{3}=J_5,
\end{array}
\end{equation}
where $r I_4=J_4^3, I_5=-J_5^2$ and $r$ is any nonzero number. After performing another loop of reduction and absorption, we obtain the structure equations 
\begin{equation}\label{2.24}
\begin{array}{ll}
d\left(\begin{array}{c}
\theta^1 \\
\theta^2 \\
\theta^3\\
\theta^4
\end{array}\right)=\left(\begin{array}{cccc}
0 & 0 & 0&0 \\
\pi^{\prime1}& 0& 0&0 \\
0& \pi^{\prime1} &0&0\\
0&0&0&0
\end{array}\right) \wedge\left(\begin{array}{c}
\theta^1 \\
\theta^2 \\
\theta^3\\
\theta^4
\end{array}\right)\\
+\left(\begin{array}{c}
T^1_{12}~\theta^1 \wedge \theta^2+T^1_{13}~\theta^1 \wedge \theta^3+T^1_{14}~\theta^1 \wedge \theta^4-\theta^2 \wedge \theta^4 \\
T^2_{24}~\theta^2 \wedge \theta^4-\theta^3\wedge\theta^4 \\
T_{13}^3~\theta^1 \wedge \theta^3+\theta^1 \wedge \theta^4+T^3_{23}~\theta^2 \wedge \theta^3+T^3_{34}~\theta^3 \wedge \theta^4\\
-\theta^1 \wedge \theta^2+T^4_{14}~\theta^1 \wedge \theta^4+T^4_{24}~\theta^2 \wedge \theta^4+T^4_{34}~\theta^3 \wedge \theta^4
\end{array}\right),
\end{array}
\end{equation}
and the essential torsion coefficients
\begin{equation}\label{2.25}
\begin{array}{ll}
T^1_{13}=-\frac{J_{4_q}}{J_5}, &T^{4}_{24}=I_7-\frac{a_2J_{4_q}}{J_4J_5^2},
\end{array}
\end{equation}
where
\begin{equation}\label{2.26}
\begin{array}{ll}
I_7=-\frac{1}{3J_4J_5^2}(3I_3J_4J_5+I_1J_{4_q}J_5-3J_{4_p}J_5).\\
\end{array}
\end{equation}
It is noted that $J_{4_q}$ is a relative invariant and $L_{4;2}, L_{4;3}$ when $b=0$ belong to the sub-branch $J_{4_q}\ne0$. So, normalizing the essential torsion coefficient $T^{4}_{24}=0$ by setting 
\begin{equation}\label{2.27}
\begin{array}{ll}
a_{2}=\frac{J_4J_5^2I_7}{J_{4_q}},
\end{array}
\end{equation}
gives the  invariant coframe on the space  $M=\bf{J}^2$
\begin{equation}\label{2.28}
\begin{pmatrix}
\theta^{1}\\
\theta^{2}\\
\theta^{3}\\
\theta^4\\
\end{pmatrix}=
\left(\begin{array}{cccc}
J_4J_5 & 0 & 0&0 \\
\frac{J_4J_5^2I_7}{J_{4_q}} &J_5& 0 &0 \\
\frac{J_5(J_4^2J_5^2I_7^2+I_2J_{4_q}^2)}{2J_{4_q}^2J_4}&\frac{J_5(3J_4J_5I_7+I_1J_{4_q})}{3J_{4_q}J_4} &\frac{J_5}{J_4}&0\\
I_3J_4&0&0&J_4
\end{array}\right)
\begin{pmatrix}
\omega^{1}\\
\omega^{2}\\
\omega^{3}\\
\omega^{4}
\end{pmatrix}.
\end{equation}
The invariant coframe corresponding to the following two canonical forms in this branch satisfies structure equations with constant structure functions.
\subsubsection{The canonical form $u'''=\alpha\frac{u''^2}{u'},\alpha\ne0,\frac{3}{2},3.$}
By setting $r=1$, we deduce
\begin{equation}\label{2.29}
\begin{array}{ll}
d\theta^{1}&=-\frac{2\alpha-3}{\gamma} B~\theta^{1}\wedge\theta^{2}-\frac{\beta}{\gamma} AB~\theta^1\wedge\theta^3-\frac{2\alpha-3}{\beta}A^2~\theta^1\wedge\theta^4-\theta^2\wedge\theta^4,\\
d\theta^{2}&=-\frac{3(\gamma^2+3)}{\beta\gamma}A^2B~\theta^{1}\wedge\theta^{2}-\frac{2\alpha-3}{\gamma} B~\theta^{1}\wedge\theta^{3}-\frac{3(\gamma^2+3)}{2\beta^2}A~\theta^{1}\wedge\theta^{4}-\frac{\beta^2}{\gamma^2}A^2~\theta^{2}\wedge\theta^{4}\\
&~~-\theta^3\wedge\theta^4,\\
d\theta^{3}&=-\frac{(2\alpha-3)(\gamma^2+9)}{2\beta^2\gamma}AB~\theta^{1}\wedge\theta^{2}-\frac{3(\gamma^2+3)}{2\beta\gamma}A^2B~\theta^1\wedge\theta^{3}+\theta^{1}\wedge\theta^{4}-\frac{3(\gamma^2+3)}{2\beta^2}A~\theta^2\wedge\theta^4\\
&~~-\frac{2\alpha-3}{\beta}A^2 ~\theta^3\wedge\theta^4,\\
d\theta^{4}&=-\theta^1\wedge\theta^2-\frac{5\gamma^2+9}{2\beta\gamma}A^2B~\theta^1\wedge\theta^4+\frac{\beta}{\gamma} AB~\theta^3\wedge\theta^4,
\end{array}
\end{equation}
where   $A^3=1,B^2=1$ and $\beta^3=2\alpha^3-9\alpha^2+9\alpha,~\gamma^2=\alpha^2-3\alpha.$\\
\\
It should be noted here that  $\frac{J_{4_q}}{J_5}=\frac{\beta}{\gamma}$ is an invariant relation, which enables the determination of the parameter $\alpha$ appearing in the canonical form.
\subsubsection{The canonical form $u'''=u''^2.$}
By letting $r=4$, we obtain
\begin{equation}\label{2.31}
\begin{array}{ll}
&d\theta^{1}=-2B~\theta^{1}\wedge\theta^{2}-2AB~\theta^1\wedge\theta^3-A^2~\theta^1\wedge\theta^4-\theta^2\wedge\theta^4,\\
&d\theta^{2}=-\frac{3}{2}A^2B~\theta^{1}\wedge\theta^{2}-2B~\theta^{1}\wedge\theta^{3}-\frac{3}{8}A~\theta^{1}\wedge\theta^{4}-A^2~\theta^2\wedge\theta^4-\theta^3\wedge\theta^4,\\
&d\theta^{3}=-\frac{1}{4}AB~\theta^{1}\wedge\theta^{2}-\frac{3}{4}A^2B~\theta^1\wedge\theta^{3}+\frac{1}{4}~\theta^{1}\wedge\theta^{4}-\frac{3}{8}A~\theta^2\wedge\theta^4-A^2~\theta^3\wedge\theta^4,\\
&d\theta^{4}=-\theta^1\wedge\theta^2-\frac{5}{4}A^2B~\theta^1\wedge\theta^4+2AB~\theta^3\wedge\theta^4,
\end{array}
\end{equation}
where $A^3=1, B^2=1$.\\
\\
This proves Theorem  \ref{Th3.2} in section 3, see \cite{Olver1995} [Theorem 8.15, page 268].
\subsection{Branch $I_{4}= 0$ and $I_6\ne0$:}
 We can eliminate the group parameters $a_{1} , a_{2}$ by normalizing the essential torsion coefficients as 
$T^4_{12}=0, T^4_{13}=1$  by  setting
\begin{equation}\label{2.32}
\begin{array}{lll}
a_{1}=\frac{a_3^3}{I_6},\quad a_{2}=\frac{a_3I_5}{I_6}.
\end{array}
\end{equation}
After performing another loop of reduction and absorption, we get the structure equations 
\begin{equation}\label{2.33}
\begin{array}{ll}
d\left(\begin{array}{c}
\theta^1 \\
\theta^2 \\
\theta^3\\
\theta^4
\end{array}\right)=\left(\begin{array}{cccc}
3\pi^{\prime1} & 0 & 0&0 \\
0& \pi^{\prime1}& 0&0 \\
0& 0 &-\pi^{\prime1}&0\\
0&0&0&2\pi^{\prime1}
\end{array}\right) \wedge\left(\begin{array}{c}
\theta^1 \\
\theta^2 \\
\theta^3\\
\theta^4
\end{array}\right)\\
+\left(\begin{array}{c}
-\theta^2 \wedge \theta^4 \\
T^2_{13}~\theta^1 \wedge \theta^3+T^2_{14}~\theta^1 \wedge \theta^4+T^2_{23}~\theta^2 \wedge \theta^3+T^2_{24}~\theta^2 \wedge \theta^4-\theta^3\wedge\theta^4 \\
T_{12}^3~\theta^1 \wedge \theta^2+T^3_{13}~\theta^1 \wedge \theta^3+T^3_{23}~\theta^2 \wedge \theta^3+T^3_{24}~\theta^2 \wedge \theta^4+T^3_{34}~\theta^3 \wedge \theta^4\\
\theta^1 \wedge \theta^3+T^4_{14}~\theta^1 \wedge \theta^4+T^4_{24}~\theta^2 \wedge \theta^4+T^4_{34}~\theta^3 \wedge \theta^4
\end{array}\right),
\end{array}
\end{equation}
 and the essential torsion coefficient $T^{2}_{13}=\frac{I_7}{a_3}$, where
$I_7=-I_3-\left(\frac{I_5}{I_6}\right)_q$ is a relative invariant.
We note that $L_{4;5},L_{4;3}$ when $b=-1,\frac{1}{2}$ belong to the sub-branch  $I_7\ne0$. So, normalizing the essential torsion coefficients $T^{2}_{13}=1$ by the relation
\begin{equation}\label{2.34}
\begin{array}{ll}
a_{3}=I_7,
\end{array}
\end{equation}
gives rise to the following invariant coframe on the space $M$
\begin{equation}\label{2.35}
\begin{pmatrix}
\theta^{1}\\
\theta^{2}\\
\theta^{3}\\
\theta^4\\
\end{pmatrix}=
\left(\begin{array}{cccc}
\frac{I_7^3}{I_6} & 0 & 0&0 \\
\frac{I_5I_7}{I_6} &I_7& 0 &0 \\
\frac{I_2I_6^2+I_5^2}{2I_6I_7}&\frac{I_1I_6+3I_5}{3I_7} &\frac{I_6}{I_7}&0\\
\frac{I_3I_7^2}{I_6}&0&0&\frac{I_7^2}{I_6}
\end{array}\right)
\begin{pmatrix}
\omega^{1}\\
\omega^{2}\\
\omega^{3}\\
\omega^{4}
\end{pmatrix}.
\end{equation}
The invariant coframe corresponding to the following three canonical forms in this branch satisfies structure equations with constant structure functions.
\subsubsection{The canonical form $u'''=3\frac{u''^2}{u'}+\alpha\frac{(2xu''+u')^{3/2}}{x^2\sqrt{u'}},~\alpha\ne0$.}
\begin{equation}\label{2.36}
\begin{array}{lll}
&d\theta^{1}=\frac{1}{2}~\theta^{1}\wedge\theta^{2}+\frac{1}{4}~\theta^1\wedge\theta^4-\theta^2\wedge\theta^4,\\
&d\theta^{2}=\lambda~\theta^{1}\wedge\theta^{2}+\theta^{1}\wedge\theta^{3}-\frac{\lambda}{2}~\theta^{1}\wedge\theta^{4}-2~\theta^2\wedge\theta^3-\theta^{3}\wedge\theta^{4},\\
&d\theta^{3}=\frac{1}{2}~\theta^{2}\wedge\theta^{3}-\frac{\lambda}{2}~\theta^2\wedge\theta^{4}-\frac{1}{4}~\theta^{3}\wedge\theta^{4},\\
&d\theta^{4}=\theta^1\wedge\theta^3-\lambda~\theta^1\wedge\theta^4-2~\theta^3\wedge\theta^4,
\end{array}
\end{equation}
where $\lambda=\frac{3\alpha^2+4}{16\alpha^2}$.\\
\\
We remark here that  $T^2_{12}=\lambda$ is an invariant relation, which enables the determination of the parameter $\alpha$ appearing in the canonical form.
\subsubsection{The canonical form $u'''=u''^{\frac{3}{2}}.$}
\begin{equation}\label{2.37}
\begin{array}{lll}
&d\theta^{1}=\frac{1}{2}~\theta^{1}\wedge\theta^{2}+\frac{1}{4}~\theta^1\wedge\theta^4-\theta^2\wedge\theta^4,\\
&d\theta^{2}=\frac{3}{16}~\theta^{1}\wedge\theta^{2}+\theta^{1}\wedge\theta^{3}-\frac{3}{32}~\theta^{1}\wedge\theta^{4}-2~\theta^2\wedge\theta^3-\theta^{3}\wedge\theta^{4},\\
&d\theta^{3}=\frac{1}{2}~\theta^{2}\wedge\theta^{3}-\frac{3}{32}~\theta^2\wedge\theta^{4}-\frac{1}{4}~\theta^{3}\wedge\theta^{4},\\
&d\theta^{4}=\theta^1\wedge\theta^3-\frac{3}{16}~\theta^1\wedge\theta^4-2~\theta^3\wedge\theta^4.
\end{array}
\end{equation}
\subsubsection{The canonical form $u'''= u''^3$.}
\begin{equation}\label{2.38}
\begin{array}{lll}
&d\theta^{1}=-\frac{2}{5}~\theta^{1}\wedge\theta^{2}-15~\theta^1\wedge\theta^3+\frac{1}{25}~\theta^1\wedge\theta^4-\theta^2\wedge\theta^4,\\
&d\theta^{2}=\frac{2}{125}~\theta^{1}\wedge\theta^{2}+\theta^{1}\wedge\theta^{3}-5~\theta^{2}\wedge\theta^{3}+\frac{2}{25}~\theta^2\wedge\theta^4-\theta^{3}\wedge\theta^{4},\\
&d\theta^{3}=-\frac{4}{3125}~\theta^{1}\wedge\theta^{2}-\frac{7}{125}~\theta^1\wedge\theta^{3}+\frac{1}{5}~\theta^{2}\wedge\theta^{3}+\frac{3}{25}~\theta^3\wedge\theta^4,\\
&d\theta^{4}=\theta^1\wedge\theta^3-\frac{1}{125}~\theta^1\wedge\theta^4+\frac{3}{5}~\theta^2\wedge\theta^4+10~\theta^3\wedge\theta^4.
\end{array}
\end{equation}
This proves Theorem  \ref{Th3.3} in section 3, see \cite{Olver1995} [Theorem 8.15, page 268].
\subsection{Branch $I_{4}= 0$, $I_6=0$ and $I_5\ne0$:}
 We can eliminate the group parameter $a_{3}$ by normalizing \\
$T^4_{12}=1$  by  setting
\begin{equation}\label{2.39}
\begin{array}{lll}
a_{3}=J_5,
\end{array}
\end{equation}
where $I_5=J_5^2=-I_3^2-I_{3_p}$.
Hence, the structure group is reduced to
\begin{equation}\label{2.40}
{G}_4=\left\{
\left(\begin{array}{cccc}
a_1 & 0 & 0&0 \\
a_2 & J_5 & 0 &0\\
\frac{a_2^2}{2a_1}+\frac{J_5^2}{2a_1}I_2&\frac{a_2J_5}{a_1}+\frac{J_5^2}{3a_1}I_1 & \frac{J_5^2} {a_1}&0\\
\frac{a_1}{J_5}I_3&0&0&\frac{a_1}{J_5}
\end{array}\right)
 \Bigg \vert a_1\neq 0
\right\},
\end{equation}
After performing another loop of reduction and absorption, we deduce the structure equations 
\begin{equation}\label{2.41}
\begin{array}{ll}
d\left(\begin{array}{c}
\theta^1 \\
\theta^2 \\
\theta^3\\
\theta^4
\end{array}\right)=\left(\begin{array}{cccc}
\pi^{\prime1} & 0 & 0&0 \\
\pi^{\prime2} & 0& 0&0 \\
0&\pi^{\prime2} &-\pi^{\prime1}&0\\
0&0&0&\pi^{\prime1}
\end{array}\right) \wedge\left(\begin{array}{c}
\theta^1 \\
\theta^2 \\
\theta^3\\
\theta^4
\end{array}\right)
+\left(\begin{array}{c}
-\theta^2 \wedge \theta^4 \\
T^2_{24}~\theta^2 \wedge \theta^4-\theta^3\wedge\theta^4 \\
T^3_{34}~\theta^3 \wedge \theta^4\\
\theta^1 \wedge \theta^2+T^4_{14}~\theta^1 \wedge \theta^4
\end{array}\right),
\end{array}
\end{equation}
and the essential torsion coefficient
\begin{equation}\label{2.42}
\begin{array}{ll}
T^2_{24}=\frac{I_7}{a_1},\\
\end{array}
\end{equation}
where
\begin{equation}\label{2.43}
\begin{array}{lll}
I_7=\frac{1}{3}I_1J_5-J_{5_x}-pJ_{5_u}+2qI_3J_5.
\end{array}
\end{equation}
It is noted that $I_7$ is a relative invariant and $L_{6;1}$ belong to the sub-branch $I_{7}= 0$.\\
\\
In this sub-branch, no unabsorbable torsion remains, and hence the remaining group variables $a_1$ and $a_2$ cannot be normalized. Moreover, the forms $\pi^{\prime 1}$ and $\pi^{\prime 2}$ are uniquely determined, so that the problem becomes determinate. Consequently, this yields the following $e$-structure on the six-dimensional prolonged space
$M^ {(1)} = M\times G_4$
\begin{equation}\label{2.44}
\left(\begin{array}{l}
\theta^1 \\
\theta^2 \\
\theta^3 \\
\theta^4\\
\pi^{\prime1}\\
\pi^{\prime 2}
\end{array}\right)=\left(\begin{array}{cccccc}
a_1 & 0 & 0&0&0&0 \\
a_2 & J_5 & 0 &0&0&0\\
\frac{a_2^2}{2a_1}+\frac{J_5^2}{2a_1}I_2& \frac{a_2J_5}{a_1}+\frac{J_5^2}{3a_1}I_1 & \frac{J_5^2} {a_1}&0&0&0\\
\frac{a_1}{J_5}I_3&0&0&\frac{a_1}{J_5}&0&0\\
I_8&I_3&0&-\frac{a_2}{J_5}&\frac{1}{a_1}&0\\
I_9&I_{10}&\frac{I_3J_5}{a_1}&-\frac{1}{2a_1}(I_2J_5+\frac{a_2^2}{J_5})&0&\frac{1}{a_1}
\end{array}\right)\left(\begin{array}{c}
\omega^1 \\
\omega^2 \\
\omega^3\\
\omega^4\\
da_1\\
da_2
\end{array}\right),
\end{equation}
where
\begin{equation}\label{2.45}
\begin{array}{lll}
I_8&=\frac{1}{6J_5}(4I_{1_p}J_5-9I_{2_q}J_5+8I_1I_3J_5-6I_3a_2),\\
I_9&=\frac{1}{18J_5a_1}(-3f_{pp}J_5^2-2J_5I_{1_p}(I_1J_5-6a_2)+3J_5I_{2_q}(I_1J_5-9a_2)\\
&-9I_{2_p}J_5^2-2I_3(J_5^2(I_1^2-\frac{9}{2}I_2)-12I_1J_5a_2+\frac{9}{2}a_2^2)),\\
I_{10}&=\frac{1}{3a_1}(J_5(I_{1_p}-3I_{2_q})+3I_3(I_1J_5+a_2)).
\end{array}
\end{equation}
This gives rise to the structure equations
\begin{equation}\label{2.46}
\begin{array}{lll}
&d\theta^{1}=-\theta^{1}\wedge\pi^{\prime1}-\theta^2\wedge\theta^4,\\
&d\theta^{2}=-\theta^{1}\wedge\pi^{\prime2}-\theta^{3}\wedge\theta^{4},\\
&d\theta^{3}=-\theta^{2}\wedge\pi^{\prime2}+\theta^3\wedge\pi^{\prime1},\\
&d\theta^{4}=\theta^1\wedge\theta^2-\theta^4\wedge\pi^{\prime1},\\
&d\pi^{\prime1}=-\theta^1\wedge\theta^3+\theta^4\wedge\pi^{\prime2},\\
&d\pi^{\prime2}=-\theta^2\wedge\theta^3-\pi^{\prime1}\wedge\pi^{\prime2}.
\end{array}
\end{equation}
This proves Theorem  \ref{Th3.4} in section 3, see \cite[Theorem 8.15, page 268]{Olver1995}.
\section{Main theorems}
\setcounter{equation}{0}
This section presents the major theorems that were established before 
section.
\begin{theorem} \label{Th3.1}
A third-order ODE $u^{\prime\prime\prime}=f(x,u,p,q)$  is equivalent 
to one of the  canonical forms (i) $u'''=e^{-u''},$ (ii) $u'''= u''^{\frac{b-2}{b-1}}, b\ne-1,0,\frac{1}{2},1,2,$ with four symmetries under point transformation   if and only if  they belong to the branch $I_4\ne 0, I_6\ne0$  and the exterior derivatives of coframe defined on this branch
\begin{equation}\label{3.1}
\begin{pmatrix}
\theta^{1}\\
\theta^{2}\\
\theta^{3}\\
\theta^4\\
\end{pmatrix}=
\left(\begin{array}{cccc}
J_{4}^3J_{6} & 0 & 0&0 \\
\frac{J_4I_5}{J_6} & J_4J_6 & 0 &0 \\
\frac{I_2J_6^4+I_5^2}{2J_4J_6^3}&\frac{I_1J_6^2+3I_5}{3J_4J_6} &\frac{J_6}{J_4}&0\\
I_3J_4^2&0&0&J_4^2
\end{array}\right)
\begin{pmatrix}
\omega^{1}\\
\omega^{2}\\
\omega^{3}\\
\omega^{4}
\end{pmatrix}
\end{equation}
have identical constant structure equations  for appropriate choices of $J_4$ and $J_6$,
where 
\begin{equation}\label{3.2}
\begin{array}{ll}
I_1=-f_q,&
I_2=-\frac{2}{9}I_1^2-f_p-\frac{1}{3}\hat{D}_xI_1,\\
I_3=-\frac{1}{6}I_{1_q},&
I_4=J_4^6=-\frac{1}{3}I_1I_2-f_u-\frac{1}{2}\hat{D}_xI_2,\\
I_5=\frac{1}{3}I_1I_{3_q}-I_3^2-I_{3_p},&
I_6=J_6^2=-I_{3_q}.
\end{array}
\end{equation}
Moreover, the parameter $b$ appearing in the canonical form (ii) can be evaluated by the invariant relation $T^1_{12}T^2_{23}=-\frac{1}{b}.$
\end{theorem}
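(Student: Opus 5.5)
The proof follows the reduction carried out in Section 2 and closes with Cartan's criterion for equivalence of $\{e\}$-structures with constant structure functions \cite[Theorem 8.15, page 268]{Olver1995}. Formula (\ref{3.1}) is the coframe (\ref{2.20}) with $r=s=1$ absorbed into $J_4,J_6$. Indeed, $I_4=J_4^6$ and $I_6=J_6^2$ hold up to the constant rescalings $r,s$, and these only change $J_4,J_6$ by constant factors; this is the meaning of "appropriate choices of $J_4$ and $J_6$".

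\textbf{Invariance of the coframe.} By (\ref{2.4}), a point transformation $\Phi$ maps $\bar u'''=\bar f$ to $u'''=f$ if and only if the lifted coframes satisfy $\Phi^*\bar\theta=\theta$ for some choice of group parameters. Each loop of Section 2 normalizes essential torsion coefficients, whose transformation law is equivariant. Hence the successive normalizations giving $G_1$, $G_2$, $G_3$ and finally (\ref{2.19}) are preserved by $\Phi$.

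\textbf{Invariance of the branch.} The condition $I_4\neq0$, $I_6\ne0$ is itself invariant. By (\ref{2.17}), $T^3_{14}$ and $T^4_{13}$ are nonzero multiples of $I_4$ and $I_6$, so $I_4$ and $I_6$ are relative invariants. Consequently equivalent equations lie in the same branch, and on this branch the fully normalized coframe (\ref{3.1}) is an invariant coframe on $M=\mathbf J^2$. So $\Phi$ is an equivalence if and only if $\Phi^*\bar\theta^i=\theta^i$, with the $\pm$ and root-of-unity ambiguities in $J_4,J_6$ absorbed by choosing branches of the roots.

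\textbf{Equivalence criterion.} Write $d\theta^i=\sum_{j<k}T^i_{jk}\theta^j\wedge\theta^k$. Equivalence to a fixed ODE whose structure functions are constant holds if and only if the structure functions of the given ODE equal those constants: $\Phi^*$ preserves the $T^i_{jk}$, and conversely Theorem 8.15 of \cite{Olver1995} guarantees that two coframes with identical constant structure equations are locally equivalent. So it suffices to compute the structure equations of (\ref{3.1}) for the two canonical forms. For $u'''=e^{-q}$ we have $f_q=-e^{-q}$, from which $I_1,I_2,I_3,I_4,I_5,I_6$ are computed directly; the same is done for $u'''=q^{(b-2)/(b-1)}$. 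One then checks that $I_4\neq0$ and $I_6\ne0$ precisely for $b\neq-1,0,\tfrac12$, which justifies the exclusions. Finally, $d\theta^i$ is expanded and rewritten in the $\theta$-basis to recover (\ref{2.21}) and (\ref{2.22}).

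\textbf{Four symmetries.} A constant-structure $\{e\}$-structure on the four-dimensional space $M$ has a symmetry group of dimension exactly $\dim M=4$. This agrees with $L_{4;6}$ and $L_{4;3}$.

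\textbf{Determining $b$.} From (\ref{2.22}),
\[T^1_{12}T^2_{23}=-\frac{\alpha A^5B}{2\beta b^2}\cdot\frac{2\beta bAB}{\alpha}=-\frac1b,\]
using $A^6=1$ and $B^2=1$. Since this product is an invariant, it recovers $b$. The same product shows that different values of $b$ give inequivalent equations. It also distinguishes form (ii) from form (i), since the corresponding product in (\ref{2.21}) equals $1$, which would correspond to $b=-1$, an excluded value.

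\textbf{Main obstacle.} The hard step is the explicit symbolic computation of the structure equations (\ref{2.22}) for general $b$. It requires tracking the algebraic roots $\alpha,\beta$ and the roots of unity $A,B$ consistently and verifying that every coefficient is constant. I would handle this with computer algebra, substituting $J_4=\beta A\,(\cdots)$ and $J_6=\alpha B\,(\cdots)$ with the explicit powers of $q$ and simplifying.
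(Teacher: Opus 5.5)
Your argument is the paper's own proof. You use the Section~2 reduction to the normalized coframe (\ref{2.20}) on the invariant branch $I_4\neq0$, $I_6\neq0$, the explicit constant structure equations (\ref{2.21})--(\ref{2.22}) for the two canonical forms, and Olver's Theorem~8.15. Your added checks are correct: the exclusions $b\neq-1,0,\tfrac12$ from $I_4\propto k(k-3)(3-2k)$ and $I_6\propto k(k-1)(k-2)$ with $k=\frac{b-2}{b-1}$, the four-dimensional symmetry group of a rank-zero coframe, and $T^1_{12}T^2_{23}=-1/b$ read off from (\ref{2.22}).

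One step is wrong, though: the constants $r,s$ are not harmless for the \emph{values} of the invariants. Put $\rho^6=r$, $\sigma^2=s$. Then the coframe (\ref{2.20}) for $(r,s)$ is $\mathrm{diag}(\rho^3\sigma,\rho\sigma,\sigma/\rho,\rho^2)$ applied to the $(1,1)$ coframe. Because $s$ appears explicitly in the $(2,1)$ and $(3,2)$ entries, it cannot be absorbed by rescaling $J_6$ inside (\ref{3.1}); that formula is exactly the $s=1$ coframe.

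A constant rescaling $\theta^i=\lambda_i\hat\theta^i$ keeps the structure functions constant but sends $\hat T^i_{jk}$ to $\frac{\lambda_i}{\lambda_j\lambda_k}\hat T^i_{jk}$. In particular $T^1_{12}T^2_{23}$ is multiplied by $1/(\lambda_2\lambda_3)=1/s$. Two consequences follow:
\begin{itemize}
\item (\ref{2.22}), computed with $s=-1$, is not what (\ref{3.1}) produces. In the normalization of (\ref{3.1}) the relation reads $T^1_{12}T^2_{23}=1/b$.
\item Your comparison of the value $1$ from (\ref{2.21}) (computed with $s=1$) against $-1/b$ from (\ref{2.22}) (computed with $s=-1$) mixes normalizations. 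In a common normalization the $e^{-q}$ value corresponds to $b=1$, not $b=-1$. This is consistent with $e^{-q}$ being a degenerate $b\to1$ limit of $q^{(b-2)/(b-1)}$.
\end{itemize}
The inequivalence of forms (i) and (ii) still holds, because $b=1$ is excluded as well. However, you must fix a single $(r,s)$ for both forms before comparing any structure constants.
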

\begin{theorem} \label{Th3.2}
A third-order ODE $u^{\prime\prime\prime}=f(x,u,p,q)$  is equivalent 
to one of the canonical forms (i) $u'''=\alpha\frac{u''^2}{u'},\alpha\ne0,\frac{3}{2},3,$  (ii) $u'''=u''^2,$ with four symmetries under point transformation   if and only if  they belong to the branch $ I_4\ne0, I_5\ne0, I_6= 0, J_{4_q}\ne0$  and the exterior derivatives of coframe defined on this branch
\begin{equation}\label{3.3}
\begin{pmatrix}
\theta^{1}\\
\theta^{2}\\
\theta^{3}\\
\theta^4\\
\end{pmatrix}=
\left(\begin{array}{cccc}
J_4J_5 & 0 & 0&0 \\
\frac{J_4J_5^2I_7}{J_{4_q}} &J_5& 0 &0 \\
\frac{J_5(J_4^2J_5^2I_7^2+I_2J_{4_q}^2)}{2J_{4_q}^2J_4}&\frac{J_5(3J_4J_5I_7+I_1J_{4_q})}{3J_{4_q}J_4} &\frac{J_5}{J_4}&0\\
I_3J_4&0&0&J_4
\end{array}\right)
\begin{pmatrix}
\omega^{1}\\
\omega^{2}\\
\omega^{3}\\
\omega^{4}
\end{pmatrix}
\end{equation}
have identical constant structure equations  for appropriate choices of $J_4$ and $J_5$,
where  
\begin{equation}\label{3.4}
\begin{array}{ll}
I_1=-f_q,&
I_2=-\frac{2}{9}I_1^2-f_p-\frac{1}{3}\hat{D}_xI_1,\\
I_3=-\frac{1}{6}I_{1_q},&
I_4=J_4^3=-\frac{1}{3}I_1I_2-f_u-\frac{1}{2}\hat{D}_xI_2,\\
I_5=-J_5^2=-I_3^2-I_{3_p},&
I_6=-I_{3_q},\\
I_7=-\frac{1}{3J_4J_5^2}(3I_3J_4J_5+I_1J_{4_q}J_5-3J_{4_p}J_5).
\end{array}
\end{equation}
Furthermore, the parameter $\alpha$ appearing in the canonical form (i) can be evaluated by the invariant relation $\frac{J_{4_q}}{J_5}=\frac{\beta}{\gamma}$, where  $\beta^3=2\alpha^3-9\alpha^2+9\alpha,~\gamma^2=\alpha^2-3\alpha$.
\end{theorem}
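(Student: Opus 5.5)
The proof follows the route of Section 2.2 for this branch and then invokes the $e$-structure equivalence theorem \cite[Theorem 8.15]{Olver1995}.

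\textbf{Step 1 (invariance of the branch).} After the fourth loop of reduction, $I_4$ and $I_6$ enter the torsion only through the group-dependent multiples $T^3_{14}=\frac{a_3^3}{a_1^3}I_4$ and $T^4_{13}=\frac{a_1}{a_3^3}I_6$. Hence $I_4$ and $I_6$ are relative invariants, and so is $I_5$ once $I_6=0$. In the same way, after normalizing $T^4_{12}=-1$ and $T^3_{14}=1/r$, the quantity $J_{4_q}$ appears in $T^1_{13}=-J_{4_q}/J_5$ and is a relative invariant. The conditions $I_4\ne0$, $I_5\ne0$, $I_6=0$, $J_{4_q}\ne0$ are therefore preserved by point transformations. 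Consequently, if an equation is equivalent to a canonical form, both equations lie in the same branch.

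\textbf{Step 2 (reduction to an $e$-structure).} In this branch I would normalize $a_1=J_4J_5$ and $a_3=J_5$ as in (\ref{2.23}), with $I_4=J_4^3$ (taking $r=1$) and $I_5=-J_5^2$. I would then normalize $T^4_{24}=0$ by choosing $a_2$ as in (\ref{2.27}). This exhausts the group $G_3$, and (\ref{3.3}) is an invariant coframe on $\mathbf{J}^2$. The roots $J_4$, $J_5$ are defined only up to cube and square roots of unity. This ambiguity is the source of the constants $A^3=1$ and $B^2=1$, and it is the reason the statement speaks of ``appropriate choices''.

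\textbf{Step 3 (canonical forms and the equivalence criterion).} For $f=\alpha q^2/p$ and for $f=q^2$, I would compute $I_1,\dots,I_7$ explicitly. For example, for $f=\alpha q^2/p$ one gets $I_1=-2\alpha q/p$ and $I_3=\alpha/(3p)$, so $I_6=0$. I would then verify that $I_4\ne0$, $I_5\ne0$ and $J_{4_q}\ne0$ whenever $\alpha\ne0,\tfrac32,3$. Next I would differentiate the coframe to obtain the constant structure equations (\ref{2.29}) and (\ref{2.31}).

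Since the structure functions are constant, the classifying manifold reduces to a point, and the coframe has rank zero. By \cite[Theorem 8.15]{Olver1995}, an equation in this branch is then equivalent to the canonical form exactly when its coframe (\ref{3.3}) satisfies the same constant structure equations for some choice of roots. The converse direction holds because the invariant coframe pulls back to the invariant coframe.

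For the parameter $\alpha$, the coefficient $\frac{J_{4_q}}{J_5}=\frac{\beta}{\gamma}$ is an invariant function of the coframe, so it must agree in equivalent equations, and it fixes $\alpha$ up to the stated root ambiguity. I would also check that, within this branch, the symmetry group of each canonical form is four-dimensional. This follows because a rank-zero $e$-structure on a four-dimensional manifold has a four-dimensional symmetry group.

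\textbf{Main obstacle.} The laborious step is the symbolic computation of $I_4$, $J_{4_q}$, $I_7$ and the full set of structure coefficients for the canonical forms, using $\hat D_x$ and the absorption. In particular, one must confirm that the excluded values $\alpha=0,\tfrac32,3$ are exactly where $I_4$, $I_5$ or $J_{4_q}$ vanishes or the structure degenerates. I would carry this out with computer algebra and check the identities $\beta^3=2\alpha^3-9\alpha^2+9\alpha$ and $\gamma^2=\alpha^2-3\alpha$ against $I_4$ and $I_5$ directly.
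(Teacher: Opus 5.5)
Your proposal is correct and follows essentially the same route as the paper: normalize $a_1=J_4J_5$, $a_3=J_5$ and then $a_2=J_4J_5^2I_7/J_{4_q}$ to get the invariant coframe, compute the constant structure equations for $u'''=\alpha u''^2/u'$ and $u'''=u''^2$, and conclude with Olver's Theorem~8.15 for rank-zero coframes. One small refinement to your remark on $\alpha$: since $(\beta/\gamma)^6=(2\alpha-3)^2/(\alpha^2-3\alpha)$, the invariant $J_{4_q}/J_5$ determines $\alpha$ only up to $\alpha\mapsto 3-\alpha$, not merely up to root choices, and this is harmless because $\bar x=u$, $\bar u=x$ maps $u'''=\alpha u''^2/u'$ to $u'''=(3-\alpha)u''^2/u'$.
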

\begin{theorem} \label{Th3.3}
A third-order ODE $u^{\prime\prime\prime}=f(x,u,p,q)$  is equivalent 
to one of  the canonical forms (i) $u'''=3\frac{u''}{u'}+\alpha\frac{(2xu''+u')^{3/2}}{x^2\sqrt{u'}},\alpha\ne0,$ (ii) $u'''= u''^{\frac{3}{2}},$
 (iii) $u'''= u''^3,$ with four symmetries under point transformation   if and only if they belong to the branch $I_4= 0, I_6\ne0$, $I_7\ne0$, and the exterior derivatives of coframe defined on this branch
\begin{equation}\label{3.5}
\begin{pmatrix}
\theta^{1}\\
\theta^{2}\\
\theta^{3}\\
\theta^4\\
\end{pmatrix}=
\left(\begin{array}{cccc}
\frac{I_7^3}{I_6} & 0 & 0&0 \\
\frac{I_5I_7}{I_6} &I_7& 0 &0 \\
\frac{I_2I_6^2+I_5^2}{2I_6I_7}&\frac{I_1I_6+3I_5}{3I_7} &\frac{I_6}{I_7}&0\\
\frac{I_3I_7^2}{I_6}&0&0&\frac{I_7^2}{I_6}
\end{array}\right)
\begin{pmatrix}
\omega^{1}\\
\omega^{2}\\
\omega^{3}\\
\omega^{4}
\end{pmatrix}
\end{equation}
have identical constant structure equations, where
\begin{equation}\label{3.6}
\begin{array}{ll}
I_1=-f_q,&
I_2=-\frac{2}{9}I_1^2-f_p-\frac{1}{3}\hat{D}_xI_1,\\
I_3=-\frac{1}{6}I_{1_q},&
I_4=-\frac{1}{3}I_1I_2-f_u-\frac{1}{2}\hat{D}_xI_2,\\
I_5=\frac{1}{3}I_1I_{3_q}-I_3^2-I_{3_p},&
I_6=-I_{3_q},\\
I_7=-I_3-\left(\frac{I_5}{I_6}\right)_q.
\end{array}
\end{equation}
Moreover, the parameter $\alpha$ appearing in the canonical form (i) can be evaluated by the invariant relation $T^2_{12}=\lambda$, where $\lambda=\frac{3\alpha^2+4}{16\alpha^2}$.
\end{theorem}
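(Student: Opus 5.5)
The plan is to run Cartan's equivalence method for point transformations through the branch $I_4=0$, $I_6\neq 0$, $I_7\neq 0$. In this branch the reductions already recorded in (\ref{2.32})--(\ref{2.35}) normalize the whole group, so the problem becomes an $e$-structure on $M=\mathbf{J}^2$. We then apply \cite[Theorem 8.15]{Olver1995}: two coframes with constant structure functions are locally equivalent exactly when those constants coincide.

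First I check that the branch conditions are invariant, so that membership in the branch is necessary.
\begin{itemize}
\item From (\ref{2.17}), $T^3_{14}=\frac{a_3^3}{a_1^3}I_4$ and $T^4_{13}=\frac{a_1}{a_3^3}I_6$. Hence $I_4$ and $I_6$ are relative invariants under $G_3$, and their vanishing or non-vanishing is preserved by point transformations.
\item Once (\ref{2.32}) is imposed, the structure equations (\ref{2.33}) show that $T^2_{13}=I_7/a_3$. Therefore $I_7$ is a relative invariant of the reduced structure group.
\end{itemize}
Normalizing $T^4_{12}=0$, $T^4_{13}=1$ and $T^2_{13}=1$ fixes $a_1,a_2,a_3$ uniquely. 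This produces the invariant coframe (\ref{3.5}), with no discrete ambiguity, since $a_3=I_7$ is linear in $I_7$.

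Second, for each canonical form (i)--(iii) I compute $f$ and then $I_1,\dots,I_7$ from (\ref{3.6}).
\begin{itemize}
\item Confirm that $I_4=0$, $I_6\neq0$ and $I_7\neq0$.
\item Substitute into (\ref{3.5}) and take exterior derivatives, expressing each $d\theta^i$ in the basis $\theta^j\wedge\theta^k$.
\end{itemize}
The outcome should be exactly (\ref{2.36}), (\ref{2.37}) and (\ref{2.38}), all with constant coefficients. This verifies the canonical side. A constant-coefficient $e$-structure on the four-dimensional manifold $M$ has a four-dimensional symmetry group, matching the four symmetries in Table 1.

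The proof of the theorem then runs in two directions.
\begin{itemize}
\item \emph{Only if.} A point equivalence $\Phi$ pulls the invariant coframe back to the invariant coframe, because it is uniquely normalized. Consequently the structure functions, being invariants, are equal, so $f$ has the same constant structure equations as its canonical form.
\item \emph{If.} Suppose the coframe (\ref{3.5}) of $f$ satisfies constant structure equations identical to one of (\ref{2.36})--(\ref{2.38}). Olver's theorem then gives a local diffeomorphism $\Phi$ of $\mathbf{J}^2$ with $\Phi^*\bar\theta=\theta$. Since $\theta$ is obtained from $\omega$ through a $G_3$-valued matrix, $\Phi^*\bar\omega$ has the form (\ref{2.4}). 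This form forces $\Phi$ to be the second prolongation of a point transformation, by the standard argument that preserving the contact system and $dx$ up to the triangular structure means $\bar x,\bar u$ depend only on $(x,u)$.
\end{itemize}

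The canonical forms are mutually inequivalent because their structure constants differ: (ii) and (iii) differ from each other and from every member of family (i). For (i), note that $T^2_{12}=\lambda=\frac{3\alpha^2+4}{16\alpha^2}$, and $\lambda=\frac{3}{16}$ is excluded because it would force $4=0$. Since $T^2_{12}$ is an invariant function, it determines $\alpha^2$. The last step is to check that $\alpha\mapsto-\alpha$ is realized by a point transformation, for instance a reflection, so that $\lambda$ determines the canonical form up to equivalence.

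The main obstacle is the symbolic computation of $d\theta^i$ for (i). The $\sqrt{\cdot}$ and $3/2$-powers make $I_5/I_6$ and $I_7$ messy, and the fact that the structure functions collapse to constants depending only on $\lambda$ has to be confirmed carefully. I would first simplify with the symmetry-adapted variables $v=2xq+p$ and $p$, for which the invariants become rational, and then carry out the exterior differentiation with computer algebra.
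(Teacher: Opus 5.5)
Your proposal is correct and follows the paper's own route: the normalizations $T^4_{12}=0$, $T^4_{13}=1$, $T^2_{13}=1$ give the uniquely determined coframe (\ref{3.5}), the canonical forms (i)--(iii) are computed to have the constant structure equations (\ref{2.36})--(\ref{2.38}), and Olver's Theorem 8.15 then gives the equivalence criterion. Your extra step of finding a point transformation that realizes $\alpha\mapsto-\alpha$ is unnecessary: (\ref{2.36}) depends on $\alpha$ only through $\lambda$, so the equations with $\alpha$ and $-\alpha$ already have identical constant structure equations, and their equivalence follows from the same theorem.
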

\begin{theorem} \label{Th3.4}
A third-order ODE $u^{\prime\prime\prime}=f(x,u,p,q)$  is equivalent 
to the canonical form $u'''=\frac{3}{2}\frac{u''^2}{u'},$ with six symmetries under point transformation  if and only if  it belongs to the branch $I_4=0$, $I_5\ne0$, $I_6=0$, and $I_7=0,$
where
\begin{equation}\label{3.7}
\begin{array}{ll}
I_1=-f_q,&
I_2=-\frac{2}{9}I_1^2-f_p-\frac{1}{3}\hat{D}_xI_1,\\
I_3=-\frac{1}{6}I_{1_q},&
I_4=-\frac{1}{3}I_1I_2-f_u-\frac{1}{2}\hat{D}_xI_2,\\
I_5=-J_5^2=-I_3^2-I_{3_p},&
I_6=-I_{3_q},\\
I_7=\frac{1}{3}I_1J_5-J_{5_x}-pJ_{5_u}+2qI_3J_5.&\\
\end{array}
\end{equation}
\end{theorem}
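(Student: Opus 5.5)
The plan is to read Theorem \ref{Th3.4} as the determinate-case conclusion of the Cartan reduction in the branch $I_4=0$, $I_6=0$, $I_5\ne0$, $I_7=0$, and to combine the $e$-structure (\ref{2.44})--(\ref{2.46}) with Olver's theorem \cite[Theorem 8.15]{Olver1995}.

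\textbf{Necessity.} For $u'''=\tfrac32 q^2/p$ I compute the invariants directly: $I_1=-3q/p$, $I_3=-\tfrac16 I_{1_q}=\tfrac{1}{2p}$, hence $I_6=-I_{3_q}=0$. Next, $I_5=-I_3^2-I_{3_p}=-\tfrac1{4p^2}+\tfrac1{2p^2}=\tfrac1{4p^2}\ne0$. Computing $I_2$ and $I_4$ (a short calculation with $\hat D_x$) should give $I_4=0$. With $J_5=\tfrac1{2p}$, I then check $I_7=\tfrac13 I_1J_5-J_{5_x}-pJ_{5_u}+2qI_3J_5=-\tfrac{q}{2p^2}+\tfrac{q}{2p^2}=0$.

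Since $I_4,I_6,I_7$ are relative invariants and $I_5$ has a fixed sign class up to the normalization $a_3=J_5$, the conditions $I_4=0$, $I_6=0$, $I_5\ne0$, $I_7=0$ are preserved under point transformations. Hence every equation equivalent to the canonical form lies in this branch.

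\textbf{Sufficiency.} In this sub-branch no unabsorbable torsion remains after the last reduction, and the Maurer--Cartan forms $\pi'^1,\pi'^2$ are uniquely determined. Thus the problem is of finite type: we obtain the $e$-structure (\ref{2.44}) on the six-dimensional prolonged space $M^{(1)}=M\times G_4$. Its structure equations (\ref{2.46}) have constant coefficients, and these coefficients are the same for every $f$ in the branch.

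By \cite[Theorem 8.15]{Olver1995}, two equations whose prolonged coframes satisfy identical constant-coefficient structure equations are locally equivalent. This holds because both coframes are Maurer--Cartan forms of the same six-dimensional Lie group, and the diffeomorphism matching them is obtained from Cartan's technique of the graph via Frobenius. The equivalence respects the original $G$-structure, so it projects to a point transformation of $\mathbf{J}^2$ mapping one equation to the other. Taking one of the two equations to be the canonical form gives the claim. The same theorem shows the symmetry group has dimension equal to $\dim M^{(1)}=6$, which matches $L_{6;1}$.

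\textbf{Main obstacle.} The delicate step is confirming that (\ref{2.46}) really is invariant in this sub-branch. The torsion coefficients $T^2_{24},T^3_{34},T^4_{14}$ in (\ref{2.41}) must vanish or be absorbed once $I_7=0$. The condition $I_7=0$ kills $T^2_{24}$ by (\ref{2.42}), but I expect it also forces the remaining coefficients, through $d^2=0$ identities, to be absorbable into $\pi'^1,\pi'^2$. I would first verify this by applying $d^2\theta^i=0$ to (\ref{2.41}) with $T^2_{24}=0$, and then check that $d\pi'^1,d\pi'^2$ close with constant coefficients as in (\ref{2.46}).

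A secondary point is sign and reality: $I_5=J_5^2$ requires $I_5>0$ over the reals. Otherwise one works over $\mathbb{C}$, consistent with the Remarks identifying $L_{6;1}$ with $L_{6;2}$.
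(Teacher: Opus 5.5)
Your proposal is correct and is essentially the paper's argument: reduce on the branch $I_4=I_6=0$, $I_5\neq0$, $I_7=0$ to the determinate $e$-structure \eqref{2.44} on $M\times G_4$, whose structure equations \eqref{2.46} are constant, then invoke \cite[Theorem 8.15]{Olver1995}; your explicit check that $u'''=\frac32 q^2/p$ has $I_2=I_4=I_6=I_7=0$ and $I_5=\frac{1}{4p^2}$ just spells out what the paper leaves implicit. On the step you flag, $d^2\theta^3=0$ does give $T^3_{34}=2T^2_{24}$, but the identities $d^2\theta^i=0$ do not determine $T^4_{14}$, which you have to compute directly: using $J_{5_p}=-2I_3J_5$ (the $q^3$-coefficient of $I_4=0$, since $I_6=0$ makes $f$ quadratic in $q$), one gets $T^4_{14}=\frac{1}{a_1}\bigl(\frac{I_3I_7}{J_5}-(\frac{I_7}{J_5})_p\bigr)$, so both coefficients vanish when $I_7\equiv0$ and \eqref{2.46} indeed holds.
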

\section{Construction of Point transformations induced by invariant coframes}
\setcounter{equation}{0}
This section outlines a method for establishing the point transformation between two comparable third-order ordinary differential equations that correspond to one of the canonical forms shown in Table 1. This methodology relies on the invariant coframes presented in the preceding section and the subsequent proposition.
\begin{proposition} 
Assume that the third-order ODEs
\begin{equation}\label{4.1}
\begin{array}{l}
u^{\prime \prime\prime}=f\left(x, u, u^{\prime},u^{\prime \prime}\right), \quad \bar{u}^{\prime \prime\prime}=\bar{f}\left(\bar{x}, \bar{u}, \bar{u}^{\prime},\bar{u}^{\prime \prime}\right),
\end{array}
\end{equation}
are equivalent with respect to a point transformation
\begin{equation}\label{4.2}
\bar x = \varphi (x,u),\,\,\bar u = \psi (x,u),\quad \varphi_{x}\psi_{u}-\varphi_{u}\psi_{x}\ne 0.
\end{equation}
Given an invariant four-dimensional coframe on the space $M$ such that
\begin{equation}\label{4.3}
\Phi^*
\left( {\begin{array}{*{20}c}
   {\bar a_1 } & 0 & 0 &0 \\
   {\bar a_2 } & {\bar a_3 } & 0&0  \\
   {\bar a_4 } &  {\bar a_5 } & {\bar a_6 }&0  \\
 {\bar a_7 }&0&0& {\bar a_9 }
\end{array}} \right)\left( {\begin{array}{*{20}c}
   {d\bar u - \bar p\,d\bar x}  \\
   {d\bar p - \bar q\,d\bar x}  \\
 {d\bar q - \bar f\,d\bar x}  \\
   {d\bar x}  \\
\end{array}} \right) = \left( {\begin{array}{*{20}c}
   {a_1 } & 0 & 0 &0 \\
   {a_2 } & {a_3 } & 0 &0 \\
   {a_4 } & {a_5}& {a_6 }&0  \\
{a_7}&0&0&{a_9}
\end{array}} \right)\left( {\begin{array}{*{20}c}
   {du - p\,dx}  \\
   {dp - q\,dx}  \\
 {dq - f\,dx} \\
   {dx}  \\
\end{array}} \right)
\end{equation}
for some functions $a_i(x,u,p,q),~ i=1,2 ,3 ,4,5, 6,7,9$ where $\Phi^*$  denotes the pullback  defined by the second prolongation of the point transformation (\ref{4.2}).
Consequently, the following system determines the point transformation between the third-order ODEs (\ref{4.1}).
\begin{equation}\label{4.4}
\begin{array}{ll}
\hat{D}_x\eta=b_9~\bar{f},\,\, \eta_u=b_4+b_7~\bar{f},\,\, \eta_p=b_5,\,\,\eta_q=b_6,\\
\hat{D}_x\chi=b_9~\eta,\,\, \chi_u=b_2+b_7~\eta,\,\, \chi_p=b_3,\\
\hat{D}_x\varphi=b_9,\,\, \varphi_u=b_7,\,\, \\
{b_7} \hat{D}_x\psi={b_9}(\psi_u-b_1),
\end{array}
\end{equation}
where $$\chi=\frac{\hat{D}_x\psi}{\hat{D}_x\varphi },\,\, \eta=\frac{\hat{D}_x\chi}{\hat{D}_x\varphi },\,\,\bar{f}=\frac{\hat{D}_x\eta}{\hat{D}_x\varphi} ,\,\,\hat{D}_x  = \frac{\partial }{{\partial x}} + p\frac{\partial }{{\partial u}} + q\frac{\partial }{{\partial p}} + f\frac{\partial }{{\partial q}}.
$$
and
\begin{equation}\label{4.5}
\left( {\begin{array}{*{20}c}
   {b_1 } & 0 & 0&0 \\
   {b_2 } & {b_3 } & 0&0  \\
   {b_4 } & {b_5}& {b_6 }&0  \\
{b_7} &0&0&{b_9}
\end{array}} \right) = \left( {\begin{array}{*{20}c}
   {\bar a_1 } & 0 & 0 &0 \\
   {\bar a_2 } & {\bar a_3 } & 0&0  \\
   {\bar a_4 } & {\bar a_5 } & {\bar a_6 } &0 \\
{\bar a_7 }&0&0&{\bar a_9 }
\end{array}} \right)^{ - 1} \left( {\begin{array}{*{20}c}
   {a_1 } & 0 & 0&0  \\
   {a_2 } & {a_3 } & 0&0  \\
   {a_4 } &   {a_5 }& {a_6 }&0  \\
 {a_7}&0&0& {a_9}
\end{array}} \right).
\end{equation}
\proof
Equation (\ref{4.3}) can be represented in terms of (\ref{4.5})  as
\begin{equation}\label{4.6}
\left( {\begin{array}{*{20}c}
   {d\bar u - \bar p\,d\bar x}  \\
   {d\bar p - \bar{q}\,d\bar x}  \\
{d\bar q-\bar f\,d\bar x}\\
   {d\bar x}  \\
\end{array}} \right) = \left( {\begin{array}{*{20}c}
   {b_1 } & 0 & 0&0  \\
   {b_2 } & {b_3 } & 0&0  \\
   {b_4 } & {b_5}& {b_6 }&0  \\
{b_7} &0&0&{b_9}
\end{array}} \right)\left( {\begin{array}{*{20}c}
   {du - p\,dx}  \\
   {dp - q\,dx}  \\
{dq-f\,dx}\\
   {dx}  \\
\end{array}} \right).
\end{equation}
On the other hand, computing the pullback of the left-hand side of equation~\eqref{4.6} yields the system~\eqref{4.4}.
\endproof
\end{proposition} 
\begin{proposition} 
Assume that the third-order ODEs
\begin{equation}\label{4.7}
\begin{array}{l}
u^{\prime \prime\prime}=f\left(x, u, u^{\prime},u^{\prime \prime}\right), \quad \bar{u}^{\prime \prime\prime}=\frac{3}{2}\frac{\bar{q}^2}{\bar{p}},
\end{array}
\end{equation}
are equivalent with respect to a point transformation
\begin{equation}\label{4.8}
\bar x = \varphi (x,u),\,\,\bar u = \psi (x,u),\quad \varphi_{x}\psi_{u}-\varphi_{u}\psi_{x}\ne 0.
\end{equation}
Consequently, the following system determines the point transformation between the third-order ODEs (\ref{4.7})
\begin{equation}\label{4.9}
\begin{array}{ll}
\hat{D}_xa_2=-\frac{b_{16}}{b_{17}}\,\,  a_{2_u}=-\frac{b_{13}}{b_{17}},\,\,  a_{2_p}=-\frac{b_{14}}{b_{17}},\,\, a_{2_q}=-\frac{b_{15}}{b_{17}},
\end{array}
\end{equation}
\begin{equation}\label{4.10}
\begin{array}{ll}
\hat{D}_xa_1=-\frac{b_{11}}{b_{12}},\,\,  a_{1_u}=-\frac{b_9}{b_{12}},\,\,  a_{1_p}=-\frac{b_{10}}{b_{12}}, a_{1_q}=0,\\
\end{array}
\end{equation}
\begin{equation}\label{4.11}
\begin{array}{ll}
\hat{D}_x\eta=b_8~\bar{f},\,\, \eta_u=b_4+b_7~\bar{f},\,\, \eta_p=b_5,\,\,\eta_q=b_6,\\
\hat{D}_x\chi=b_8~\eta,\,\, \chi_u=b_2+b_7~\eta,\,\, \chi_p=b_3,\\
\hat{D}_x\varphi=b_8, \varphi_u=b_7,\\
 {b_7} \hat{D}_x\psi={b_8}(\psi_u-b_1),
\end{array}
\end{equation}
where
 $a_1(x,u,p),a_2(x,u,p,q)$ are auxiliary functions, $\chi=\frac{\hat{D}_x\psi}{\hat{D}_x\varphi }, \eta=\frac{\hat{D}_x\chi}{\hat{D}_x\varphi },$\\$\bar{f}=\frac{3}{2}\frac{\eta^2}{\chi},$
and
\begin{equation}\label{4.12}
\begin{aligned}
&\left( {\begin{array}{*{20}c}
   {b_1 } & 0 & 0&0 &0&0\\
   {b_2 } & {b_3 } & 0&0 &0&0 \\
   {b_4 } & {b_5}& {b_6 }&0&0&0  \\
{b_7} &0&0&{b_8}&0&0\\
b_9&b_{10}&0&b_{11}&b_{12}&0\\
b_{13}&b_{14}&b_{15}&b_{16}&0&b_{17}
\end{array}} \right) 
=
\left(\begin{array}{cccccc}
1 & 0 & 0&0&0&0 \\
0 & \frac{1}{2\chi} & 0 &0&0&0\\
0& -\frac{\eta}{4\chi^3}& \frac{1} {4\chi^2}&0&0&0\\
1&0&0&2\chi&0&0\\
0&\frac{1}{2\chi}&0&0&1&0\\
0& -\frac{\eta}{4\chi^3}&\frac{1}{4\chi^2}&0&0&1\\
\end{array}\right)^{ - 1}\\
 &\left(\begin{array}{cccccc}
a_1 & 0 & 0&0&0&0 \\
a_2 & J_5 & 0 &0&0&0\\
\frac{a_2^2}{2a_1}+\frac{J_5^2}{2a_1}I_2& \frac{a_2J_5}{a_1}+\frac{J_5^2}{3a_1}I_1 & \frac{J_5^2} {a_1}&0&0&0\\
\frac{a_1}{J_5}I_3&0&0&\frac{a_1}{J_5}&0&0\\
I_8&I_3&0&-\frac{a_2}{J_5}&\frac{1}{a_1}&0\\
I_9&I_{10}&\frac{I_3J_5}{a_1}&-\frac{1}{2a_1}(I_2J_5+\frac{a_2^2}{J_5})&0&\frac{1}{a_1}
\end{array}\right),
\end{aligned}
\end{equation}
\begin{equation*}
\begin{array}{lll}
I_1=-f_q,\\
I_2=-\frac{2}{9}I_1^2-f_p-\frac{1}{3}\hat{D}_xI_1,\\
I_3=-\frac{1}{6}I_{1_q},\\
I_5=J_5^2=-I_3^2-I_{3_p},\\
I_8=\frac{1}{6J_5}(4I_{1_p}J_5-9I_{2_q}J_5+8I_1I_3J_5-6I_3a_2),\\
I_9=\frac{1}{18J_5a_1}(-3f_{pp}J_5^2-2J_5I_{1_p}(I_1J_5-6a_2)+3J_5I_{2_q}(I_1J_5-9a_2)\\
~~~~-9I_{2_p}J_5^2-2I_3(J_5^2(I_1^2-\frac{9}{2}I_2)-12I_1J_5a_2+\frac{9}{2}a_2^2)),\\
I_{10}=\frac{1}{3a_1}(J_5(I_{1_p}-3I_{2_q})+3I_3(I_1J_5+a_2)).\\
\end{array}
\end{equation*}
\proof
The equivalence of the third-order  ODEs (\ref{4.7}) under the point transformation (\ref{4.8}) can be checked by using the  invariant six-dimensional coframe (\ref{2.44}) on the space $M^{(1)}=M\times G_4$ such that
\begin{equation}\label{4.13}
\begin{aligned}
&\Phi^*\left(\begin{array}{cccccc}
\bar{a}_1 & 0 & 0&0&0&0 \\
\bar{a}_2 &\bar{J}_5 & 0 &0&0&0\\
\frac{\bar{a}_2^2}{2\bar{a}_1}+\frac{\bar{J}_5^2}{2\bar{a}_1}\bar{I}_2& \frac{\bar{a}_2\bar{J}_5}{\bar{a}_1}+\frac{\bar{J}_5^2}{3\bar{a}_1}\bar{I}_1 & \frac{\bar{J}_5^2} {\bar{a}_1}&0&0&0\\
\frac{\bar{a}_1}{\bar{J}_5}\bar{I}_3&0&0&\frac{\bar{a}_1}{\bar{J}_5}&0&0\\
\bar{I_8}&\bar{I}_3&0&-\frac{\bar{a}_2}{\bar{J}_5}&\frac{1}{\bar{a}_1}&0\\
\bar{I_9}&\bar{I_{10}}&\frac{\bar{I}_3\bar{J}_5}{\bar{a}_1}&-\frac{1}{2\bar{a}_1}(\bar{I}_2\bar{J}_5+\frac{\bar{a}_2^2}{\bar{J}_5})&0&\frac{1}{\bar{a}_1}
\end{array}\right)\left(\begin{array}{c}
 {d\bar u - \bar p\,d\bar x}  \\
   {d\bar p - \bar{q}\,d\bar x}  \\
{d\bar q-\bar f\,d\bar x}\\
   {d\bar x}  \\
d\bar{a}_1\\
d\bar{a}_2
\end{array}\right)=\\
&\left(\begin{array}{cccccc}
a_1 & 0 & 0&0&0&0 \\
a_2 & J_5 & 0 &0&0&0\\
\frac{a_2^2}{2a_1}+\frac{J_5^2}{2a_1}I_2& \frac{a_2J_5}{a_1}+\frac{J_5^2}{3a_1}I_1 & \frac{J_5^2} {a_1}&0&0&0\\
\frac{a_1}{J_5}I_3&0&0&\frac{a_1}{J_5}&0&0\\
I_8&I_3&0&-\frac{a_2}{J_5}&\frac{1}{a_1}&0\\
I_9&I_{10}&\frac{I_3J_5}{a_1}&-\frac{1}{2a_1}(I_2J_5+\frac{a_2^2}{J_5})&0&\frac{1}{a_1}
\end{array}\right)\left(\begin{array}{c}
 {du - p\,dx}  \\
   {dp - q\,dx}  \\
{dq-f\,dx}\\
   {dx}  \\
da_1\\
da_2
\end{array}\right),
\end{aligned}
\end{equation}
 where $\Phi^*$  denotes the pullback defined by the second prolongation of the point transformation (\ref{4.8}). 
 By incorporating the values $\bar{I_1}=-\frac{3\eta}{\chi},~\bar{I_2}=0,~\bar{I_3}=\frac{1}{2\chi},~\bar{J_5}=\frac{1}{2\chi},~\bar{I_8}=0,~\bar{I_9}=0,~\bar{I}_{10}=-\frac{\eta}{4\chi^3}$ for $\bar{u}^{\prime \prime\prime}=\frac{3}{2}\frac{\bar{q}^2}{\bar{p}}$ and  $\bar{a_1}=1,~\bar{a_2}=0$,  equation (\ref{4.13}) can be expressed as
\begin{equation}\label{4.14}
\left( {\begin{array}{*{20}c}
   {d\bar u - \bar p\,d\bar x}  \\
   {d\bar p - \bar{q}\,d\bar x}  \\
{d\bar q-\bar f\,d\bar x}\\
   {d\bar x}  \\
0\\
0
\end{array}} \right) = \left( {\begin{array}{*{20}c}
   {b_1 } & 0 & 0&0 &0&0\\
   {b_2 } & {b_3 } & 0&0 &0&0 \\
   {b_4 } & {b_5}& {b_6 }&0&0&0  \\
{b_7} &0&0&{b_8}&0&0\\
b_9&b_{10}&0&b_{11}&b_{12}&0\\
b_{13}&b_{14}&b_{15}&b_{16}&0&b_{17}
\end{array}} \right)\left( {\begin{array}{*{20}c}
   {du - p\,dx}  \\
   {dp - q\,dx}  \\
{dq-f\,dx}\\
   {dx}  \\
da_1\\
da_2
\end{array}} \right).
\end{equation}
On the other hand,  computing the pullback of the left-hand side of equation (\ref{4.14}) yields systems (\ref{4.9}),  (\ref{4.10}),  (\ref{4.11}).
\endproof
\end{proposition} 
\begin{remark}
Utilising systems (\ref{4.9}), (\ref{4.10}) and the equations  
\begin{equation}
{b_7} \hat{D}_x\psi={b_8}(\psi_u-b_1), \chi_p=b_3
\end{equation} 
given in system (\ref{4.11}),  we obtain 
\begin{equation}
a_1(x,u,p)=\frac{\varphi_x\psi_u-\varphi_u\psi_x}{\hat{D}_x\varphi}, J_5(x,u,p)=\frac{\chi_p}{2\chi}, a_{1_p}=\frac{a_1}{J_5}a_{2_q}, a_{2_{qq}}=0, a_{1_p}=a_1(J_5-I_3).
\end{equation} 
Therefore
\begin{equation}\label{4.17}
a_2(x,u,p,q)=J_5(J_5-I_3) q+A(x,u,p),
\end{equation}
for some function $A(x,u,p).$
\end{remark}

We now present examples to illustrate our main results.

\begin{example}
Consider the class of non-linear third-order ODE
\begin{equation}\label{4.18}
	u'''=\frac{1}{u'}(3u''^2- u'^5e^{\frac{u''}{u'^3}}).
\end{equation}
The function $f(x,u,p,q)=\frac{1}{p}(3q^2- p^5e^{\frac{q}{p^3}})$ satisfies the conditions of  Theorem 3.1 for appropriate choices of $J_4$ and $J_6$. It is equivalent to the canonical form 
\begin{equation}\label{4.19}
\bar{u}'''=e^{-\bar{u}''},
\end{equation}
 with four symmetries under point transformation.\\
 \\
To construct the point transformation (\ref{4.2}), we must calculate the entries of the matrix in the left hand side of (\ref{4.5}) by invoking equations (\ref{4.18}) and (\ref{4.19}) as outlined below.
\begin{equation}\label{4.20}
\left( {\begin{array}{*{20}c}
   {b_1 } & 0 & 0&0  \\
   {b_2 } & {b_3 } & 0&0  \\
   {b_4 } & {b_5}& {b_6 }&0  \\
{b_7} &0&0&{b_9}
\end{array}} \right) =\begin{pmatrix}
  -\frac{1}{p}e^{\frac{2(\eta p^3+q)}{p^3}} & 0 & 0 &0 \\
  \frac{q}{p^3}e^{\frac{\eta p^3+q}{p^3}}& -\frac{1}{p^2}e^{\frac{\eta p^3+q}{p^3}}& 0&0  \\
  - e^{\frac{q}{p^3}}& \frac{3q}{p^4}& -\frac{1}{p^3}&0  \\
e^{\frac{\eta p^3+q}{p^3}}&0&0&pe^{\frac{\eta p^3+q}{p^3}}
\end{pmatrix}
\end{equation}
Solving  system (\ref{4.4}) yields the point transformation 
.\begin{equation}\label{4.21}
\bar x =u,~\bar u =x.
\end{equation}
\end{example}
\begin{example}
Consider the class of non-linear third-order ODE
\begin{equation}\label{4.22}
	u'''=\frac{1}{u^2u'}(u^2u''^2+2uu'^2u''-2u'^4).
\end{equation}
The function $f(x,u,p,q)=\frac{1}{u^2p}(u^2q^2+2up^2q-2p^4)$ satisfies the conditions of Theorem 3.2 for appropriate choices of $J_4$ and $J_5$. Moreover, since $\frac{\beta}{\gamma}=-\frac{i}{\sqrt[6]{2}}$,  it is equivalent to the canonical form 
\begin{equation}\label{4.23}
\bar{u}'''=\frac{\bar{u}''^2}{\bar{u}'},
\end{equation}
 with four symmetries under point transformation.\\
 \\
To construct the point transformation (\ref{4.2}), we must calculate the entries of the matrix in the left hand side of (\ref{4.5}) by using equations (\ref{4.22}) and (\ref{4.23}) as outlined below.
\begin{equation}\label{4.24}
\left( {\begin{array}{*{20}c}
   {b_1 } & 0 & 0&0  \\
   {b_2 } & {b_3 } & 0&0  \\
   {b_4 } & {b_5}& {b_6 }&0  \\
{b_7} &0&0&{b_9}
\end{array}} \right) =\begin{pmatrix}
  -\frac{\chi^2(2p^2-qu)}{\eta up^2}& 0 & 0 &0 \\
 -\frac{2\chi}{u}& \frac{\chi}{p}& 0&0  \\
 -\frac{2\eta(3p^2-qu)}{u(2p^2-qu)}&\frac{4\eta p}{2p^2-qu}&-\frac{\eta u}{2p^2-qu}&0  \\
0&0&0&-\frac{\chi(2p^2-qu)}{\eta up}
\end{pmatrix}
\end{equation}
Solving  system (\ref{4.4}) gives the point transformation 
.\begin{equation}\label{4.25}
\bar x =-x,~\bar u =\frac{1}{u}.
\end{equation}
\end{example}
\begin{example}
Consider the class of non-linear third-order ODE
\begin{equation}\label{4.26}
	u'''=\frac{3u'u''^2}{1+u'^2}.
\end{equation}
The function $f(x,u,p,q)=\frac{3pq^2}{1+p^2}$ satisfies the conditions of Theorem 3.4. So it is equivalent to the canonical form 
\begin{equation}\label{4.27}
\bar{u}'''=\frac{3}{2} \frac{\bar{u}''^2}{\bar{u}'}
\end{equation}
 with six point symmetries under point transformation.\\
 \\
To find the point transformation (\ref{4.8}), we must calculate the entries of matrix in the left hand side of (\ref{4.12}) by using equations (\ref{4.26}) as outlined below.
\begin{equation}
\begin{aligned}
 &\left( {\begin{array}{*{20}c}
   {b_1 } & 0 & 0&0 &0&0\\
   {b_2 } & {b_3 } & 0&0 &0&0 \\
   {b_4 } & {b_5}& {b_6 }&0&0&0  \\
{b_7} &0&0&{b_8}&0&0\\
b_9&b_{10}&0&b_{11}&b_{12}&0\\
b_{13}&b_{14}&b_{15}&b_{16}&0&b_{17}
\end{array}} \right)=\\
&\left(\begin{array}{cccccc}
a_1 & 0 & 0&0&0&0 \\
2a_2\chi &2 J_5 \chi& 0 &0&0&0\\
\frac{2I_2J_5^2\chi^2+2a_2^2\chi^2+2a_1a_2\eta}{a_1}& \frac{4J_5(I_1J_5\chi^2+3a_2\chi^2+\frac{3}{2}a_1\eta) }{3a_1}& \frac{4\chi^2J_5^2} {a_1}&0&0&0\\
\frac{a_1(I_3-J_5)}{2J_5\chi }&0&0&\frac{a_1}{2J_5\chi}&0&0\\
I_8-a_2&I_3-J_5&0&-\frac{a_2}{J_5}&\frac{1}{a_1}&0\\
\frac{-I_2J_5^2+2I_9a_1-a_2^2}{2a_1}&\frac{-I_1J_5^2+3I_{10}a_1-3J_5a_2}{3a_1}&\frac{J_5(I_3-J_5)}{a_1}&\frac{-I_2J_5^2-a_2^2}{2J_5a_1}&0&\frac{1}{a_1}
\end{array}\right),
\end{aligned}
\end{equation}
where $I_1=-\frac{6pq}{p^2+1},~I_2=-\frac{q^2}{p^2+1},~I_3=\frac{p}{p^2+1},~J_5=\frac{i}{p^2+1}$,  for $u'''=\frac{3u'u''^2}{1+u'^2}$.
Utilising Remark 4.3, $a_2=-\frac{iq}{(p+i)(p^2+1)}+A(x,u,p),$ for some function $A(x,u,p)$. Therefore, substituting this value of $a_2$ into the system (\ref{4.9}) gives \\
$$a_2=-\frac{iq}{(p+i)(p^2+1)}.$$
Secondly, solving system (\ref{4.10}) gives $$a_1=-\frac{2}{i+p}.$$
Finally, solving system  (\ref{4.11})  results in the point transformation 
\begin{equation}\label{4.29}
\bar x=u+ix, \bar u=x+iu.
\end{equation}
\end{example}

\section{Conclusion}
In this work, we applied Cartan’s equivalence method to explicitly construct invariant coframes for four distinct branches associated with scalar third-order ordinary differential equations. These invariant coframes provide a complete characterization of all non-linearizable third-order ODEs that admit a four-dimensional Lie symmetry subalgebra under point transformations. 

Building on this geometric framework, we also developed a systematic procedure for constructing the corresponding point transformations directly from the derived invariant coframes. In cases involving prolongation, where not all parameters can be normalized, we showed that the remaining parameters naturally give rise to auxiliary functions, which can then be used to efficiently determine the required point transformation.

The presented examples demonstrate the effectiveness of the method and illustrate how the invariant coframes lead to a deeper understanding of the structure and classification of third-order ODEs that admit a four-dimensional Lie subalgebra.
\subsection*{Acknowledgements}
The authors would like to thank Birzeit University for its support and excellent research facilities. FM thanks Wits for its support.\\\\
Conflict of Interest: The authors declare that they have no conflict of interest.

\section*{Appendix}
\begin{table}[H]
\centering
\begin{tabular}{|l|l|l|}
 \hline
Algebra &Lie point symmetry realizations in plane &Representative equations\\ \hline\hline
$L_{4;1}$&$X_1=\partial_u, X_2=u\partial_u, X_3=f_1(x)\partial_u, X_4=f_2(x)\partial_u$&$u'''=a^3(x)u$, \\
&$f_i'''+a(x)f_i'=0,\,i=1,2$&$a$ not const,~$ \left(\frac{2aa''-3a'^2}{a^4}\right)_x\neq 0$\\
$L_{4;2}$&$X_1=\partial_x, X_2=\partial_u, X_3=u\partial_u, X_4=x\partial_x$&$u'''=\alpha\frac{q^2}{p}, \alpha\ne0,\tfrac{3}{2}$\\
$L_{4;3}$&$X_1=\partial_u, X_2=\partial_x, X_3=x\partial_u,$&$u'''=\alpha q^{b-2\over b-1}, b\ne1,2, \alpha\ne0$\\
& $X_4=x\partial_x+(1+b)u\partial_u, b\ne1$&\\
$L_{4;4}$&$X_1=\partial_u, X_2=\partial_x, X_3=x\partial_x+u\partial_u,$&$u'''={3pq^2\over 1+p^2}+\alpha{q^2\over 1+p^2}, \alpha\ne0$\\
& $X_4=u\partial_x-x\partial_u$&\\
$L_{4;5}^{II}$&$X_1=\partial_u, X_2=x\partial_x+u\partial_u, X_3=x\partial_x,$&$u'''=3 {q^{2}\over p}+\alpha{(2xq+p)^{3/2}\over x^2\sqrt{p}},\alpha\ne0$\\
& $X_4=2xu\partial_x+u^2\partial_u$&\\
$L_{4;6}$&$X_1=\partial_u, X_2=x\partial_u, X_3=\partial_x,$&$u'''=\alpha\exp(-q), \alpha\ne0$\\
& $X_4=u\partial_x+(2u+\frac12x^2)\partial_u$&\\ \hline
$L_{5;1}$&$X_1=\partial_x, X_2=u\partial_u, X_3=f_1(x)\partial_u, X_4=f_2(x)\partial_u,$&$u'''+a_1p+a_0u=0$, \\
& $X_5=f_3(x)\partial_u$,$f_i'''+a_1f_i'+a_0f_i=0$,$i=1,\ldots,3$& $a_i$ consts., $a_0\ne0$\\ \hline
$L_{6;1}$&$X_1=\partial_x, X_2=x\partial_x, X_3=x^2\partial_x,$&$u'''=\frac32 {q^{2}\over p}$\\
& $X_4=\partial_u, X_5=u\partial_u, X_6=u^2\partial_u$&\\
$L_{6;2}$&$X_1=\partial_u, X_2=\partial_x, X_3=x\partial_x+u\partial_u, X_4=u\partial_x-x\partial_u$&$u'''={3pq^{2}\over 1+p^2}$\\
& $X_5=(x^2-u^2)\partial_u+2xu\partial_u, X_6=2xu\partial_x+(u^2-x^2)\partial_u$&\\ \hline
$L_{7;1}$&$X_1=\partial_u, X_2=x\partial_u, X_3=x^2\partial_u, X_4=u\partial_u$&$u'''=0$\\
& $X_5=\partial_x, X_6=x\partial_x, X_7=x^2\partial_x+2xu\partial_u$&\\ \hline
\end{tabular}
\caption{Classification of scalar third-order ODEs admitting non-similar real Lie algebras $L_r$ of dimension $r$, where $r=4, 5, 6, 7$. }
\end{table}

\begin{thebibliography}{99}
\bibitem{Lie1888} Lie, S.  Klassifikation und Integration von gew$\ddot{o}$nlichen Differentialgleichungen zwischen $x, y$, die eine Gruppe von Transformationen gestatten, III. Archiv for Matematik og Naturvidenskab. {\em Archiv for Matematik og Naturvidenskab} {\bf 1883}, {\em 8}, 371-427.
\bibitem{Ibragimov2007}Ibragimov, Nail H., and Meleshko, V.  Invariants and invariant description of second-order ODEs with three infinitesimal symmetries. I. {\em  Communications in Nonlinear Science and Numerical Simulation} {\em 12}, no. 8 (2007): 1370-1378.
\bibitem{Ibragimov2008}Ibragimov, Nail H., and Meleshko, V.  Invariants and invariant description of second-order ODEs with three infinitesimal symmetries. II. {\em Communications in Nonlinear Science and Numerical Simulation} {\em 13}, no. 6 (2008): 1015-1020.
\bibitem{Al-Dweik2025}  Al-Dweik Ahmad Y., Marwan  Aloqueili,  Omar A. Abuloha,   Raddad Batoul M.,   Khalil Sondos R., and Mahomed F. M. ``Invariant Characterization of Scalar Second-Order ODEs That Admit Three Point Symmetry Lie Algebra via Cartan’s Equivalence Method.'' Mathematical Methods in the Applied Sciences 0 (2025): 1-10.
\bibitem{Mahomed1988}Mahomed  Fazal M.,  and P. G. L. Leach. "Normal forms for third order equations." In Proceedings of the Workshop on Finite Dimensional Integrable Nonlinear Dynamical Systems, vol. 178. World Scientific: Singapore, 1988.
\bibitem{Gat1992}Gat Omri. "Symmetry algebras of third‐order ordinary differential equations." Journal of Mathematical Physics 33, no. 9 (1992): 2966-2971.
\bibitem{Schmucker1998}Schmucker  Annett, and Günter Czichowski. "Symmetry algebras and normal forms of third order ordinary differential equations." J. Lie Theory 8 (1998): 129-137.
\bibitem{Ibragimov1996}Ibragimov Nail H., and Mahomed Fazal M. Ordinary differential equations. In CRC Handbook of Lie Group Analysis of Differential Equations, Vol. 3, Ibragimov NH(ed). CRC Press: Boca Raton, 1996; 191–216.
\bibitem{Chern1940} Chern, S. S.  The geometry of the differential equation $y'''=F(x,y,y,y'')$, Sci. Rep. Nat. Tsing Hua Univ. 4 (1940), 97-111.
\bibitem{Neut2002}Neut  Sylvain, and  Petitot Michel. "La géométrie de l'équation $y'''= f (x, y, y', y'')$." Comptes rendus. Mathématique 335, no. 6 (2002): 515-518.
\bibitem{Ibragimov2005}Ibragimov Nail H., and  Meleshko Sergey V.. "Linearization of third-order ordinary differential equations by point and contact transformations." Journal of Mathematical Analysis and Applications 308, no. 1 (2005): 266-289.
\bibitem{Dweik2018_1}Al-Dweik  Ahmad Y.,  Mustafa M. T., and  Mahomed Fazal M. "Invariant characterization of scalar third‐order ODEs that admit the maximal point symmetry Lie algebra." Mathematical Methods in the Applied Sciences 41, no. 12 (2018): 4714-4723.
\bibitem{Dweik2019}Al-Dweik  Ahmad Y.,  Mahomed F. M., and  Mustafa Muhammad T. "Invariant characterization of third-order ordinary differential equations $u'''= f (x, u, u', u'')$ with five-dimensional point symmetry group." Communications in Nonlinear Science and Numerical Simulation 67 (2019): 627-636.
\bibitem{Dweik2018_2}Al-Dweik Ahmad Y., Mustafa M. T.,  Mahomed Fazal M., and Rajai S. Alassar. "Linearization of third‐order ordinary differential equations via point transformations." Mathematical Methods in the Applied Sciences 41, no. 16 (2018): 6955-6967.
\bibitem{Olver1995} Olver, P.J. Equivalence, Invariants and Symmetry, Cambridge University Press, Cambridge, 1995.                           
\bibitem{Neut2003} Neut, S., 2003. Implantation et nouvelles applications de la méthode d'équivalence de Cartan. PhD thesis, Univ. Lille I.
\end{thebibliography}
\end{document}